\newtheorem{thm}{Theorem}[section]
\newtheorem{lemma}[thm]{Lemma}
\newtheorem{prop}[thm]{Proposition}
\newtheorem{cor}[thm]{Corollary}
\theoremstyle{definition}
\newtheorem{rem}[thm]{Remark}
\newtheorem{conj}[thm]{Conjecture}
\newtheorem{define}[thm]{Definition}
\newtheorem{define-thm}[thm]{Definition-Theorem}
\newcommand{\Hom}{{\rm Hom}}
\newcommand{\End}{{\rm End}}
\newcommand{\Ind}{^{\uparrow{\tilde{G}}}}
\newcommand{\tilt}{{\rm tilt}\,}
\newcommand{\silt}{{\rm silt}\,}
\newcommand{\hmtB}{K^b({\rm proj}\,B)}
\newcommand{\hmttildeB}{K^b({\rm proj}\,\tilde{B})}
\begin{document}

\title{
On tilting complexes over blocks covering cyclic blocks}
\author{Yuta KOZAKAI}
\address[Yuta KOZAKAI]{Department of Mathematics,  
Tokyo University of Science}
\email{kozakai@rs.tus.ac.jp}
%\date{\today}
\subjclass[2020]
{20C20, 16E35}

\keywords{Tilting-discrete algebras, Tilting complexes, Cyclic blocks,
Induction functors, Tilting mutations, Blocks of finite groups}

\maketitle
\begin{abstract}
Let $p$ be a prime number, $k$ an algebraically closed field of characteristic $p$,
$\tilde{G}$ a finite group, and $G$ a normal subgroup of $\tilde{G}$
having a $p$-power index in $\tilde{G}$.
Moreover let $B$ be a block of $kG$ with a cyclic defect group
and $\tilde{B}$ be the unique block of $k\tilde{G}$ covering $B$.
We study tilting complexes over the block $\tilde{B}$ and show that
the block $\tilde{B}$ is a tilting-discrete algebra.
Moreover we show that the set of all tilting complexes over $\tilde{B}$
is isomorphic to that over $B$ as partially ordered sets.
\end{abstract}

\section{Introduction}
In representation theory of finite groups,
there is a well-known and important conjecture
called Brou\'{e}'s abelian defect group conjecture.
\begin{conj}
Let $k$ be an algebraically closed field of characteristic $p>0$,
$G$ a finite group,
$B$ a block of the group algebra $kG$ with defect group $D$,
and $b$ the Brauer correspondent of $B$ in $kN_G(D)$.
If $D$ is abelian, then the block $B$ is derived equivalent to $b$.
\end{conj}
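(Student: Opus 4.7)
The statement is Brou\'e's abelian defect group conjecture, a long-standing open problem; what follows is therefore a \emph{strategy} rather than a proof, and one tailored to the cyclic defect case that underlies the rest of this paper.

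The plan is to apply Rickard's Morita theory for derived categories: exhibit an explicit tilting complex $T\in\hmtB$ with $\End_{\hmtB}(T)\cong b$, which immediately yields $D^{b}(\mathrm{mod}\,B)\simeq D^{b}(\mathrm{mod}\,b)$. First I would replace $B$ and $b$ by their source algebras so that both sides become interior $D$-algebras over the same defect group, reducing the comparison to two finite-dimensional algebras with closely related local structure. In the cyclic defect case, which is the one relevant here, Dade's theorem tells us that both $B$ and $b$ are Brauer tree algebras; one can then construct $T$ combinatorially from the Brauer tree of $B$, its indecomposable summands being two-term complexes of projective indecomposables indexed by edges of the tree, with connecting maps dictated by adjacency. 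Verifying self-orthogonality $\Hom_{\hmtB}(T,T[i])=0$ for $i\neq 0$ and generation of $\hmtB$ then reduces to direct computations on projectives guided by the combinatorics of the tree.

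The hard part will be extending this recipe to arbitrary abelian $D$. For $D$ of $p$-rank $\ge 2$ there is no uniform combinatorial model for the source algebra of $B$, and consequently no canonical candidate for $T$; each currently established case (blocks with defect of $p$-rank $2$ for small primes, blocks with Klein four defect, principal blocks of various finite groups of Lie type, and so on) relies on substantial structural information about $B$ together with case-by-case verification, and a further serious difficulty is to upgrade a one-sided tilting complex to a \emph{splendid} two-sided tilting complex compatible with the local data, so as to be consistent with Brou\'e's refined version of the conjecture. For the present paper this obstruction does not arise: the cyclic case is enough, and subsequent sections will leverage Rickard's Brauer tree construction together with induction functors from $G$ to $\tilde{G}$ to produce derived equivalences for blocks $\tilde{B}$ of $k\tilde{G}$ covering cyclic blocks.
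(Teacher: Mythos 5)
You correctly recognize that this statement is Brou\'e's abelian defect group conjecture, a famous open problem: the paper does not prove it, it merely records it as motivation, so there is no proof in the paper for your attempt to be measured against. Your outline of the known cyclic-defect resolution via Rickard's Brauer-tree tilting complexes, and your honest assessment of why the higher-rank abelian case remains open, are both accurate and appropriately labelled as a strategy rather than a proof; no further comparison is needed.
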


There are many cases that the
Brou\'{e}'s abelian defect group conjecture holds,
for example
$D$ is a cyclic group \cite{Ri1989-2, Rou1993},
$D$ is a Klein four group \cite{E1990, Ri1996},
$D$ is a $p$-solvable group \cite{HL2000}
and more.

It is known that Brou\'{e}'s abelian defect group conjecture
does not hold generally without the assumption that
the defect group $D$ is abelian.
%(for example, by \cite{C2000} for the Suzuki group $Sz(8)$ and $p=2$
%the principal block $B_0(\mathcal{O}Sz(8)$) is not derived equivalent to $B_0(\mathcal{O}H)$
%where $H$ is the normalizer in $Sz(8)$ of a Sylow $2$-subgroup.)
However even if the defect group $D$ is not abelian,
it is thought that the similar statement holds in some situations
and that how we may state the nonabelian version conjecture.
On the other hand, in general, a derived equivalence between blocks induces a
perfect isometry 
(see \cite[Proposition 4.10]{B1994} or \cite[Theorem 9.2.9]{1685Rickard}).
Hence it is expected that two blocks should be derived equivalent
if there is a perfect isometry between the blocks,
here we recall that an isometry $I: \mathbb{Z}{\rm Irr}(B)\rightarrow 
\mathbb{Z}{\rm Irr}(b)$ is called a perfect isomery if it satisfies
separability condition and integrality condition 
(further details can be seen in \cite[D\'{e}finition(2) 1.1]{B1990}).
In regards to this, we know the following result.

\begin{thm}[{\cite[Theorem 1.1 (iv)]{HKK2010}}]\label{HKK-Theorem1.1}
Let $k$ be an algebraically closed field of characteristic $p>2$.
Let $\tilde{G}$ be a finite group with a Sylow $p$-subgroup 
$\tilde{P}$ isomorphic to $C_{p^n}\rtimes C_p$ for an integer $n\geq 2$.
Let $\tilde{B}_0$ be the principal block of $k\tilde{G}$ and
$\tilde{b}_0$ the principal block of $kN_{\tilde{G}}(P)$,
where $P$ is a subgroup of $\tilde{P}$ isomorphic to $C_{p^n}$.
Then there exists an isometry 
$\tilde{I}: \mathbb{Z}{\rm Irr}(\tilde{B}_0) 
\rightarrow \mathbb{Z}{\rm Irr}(\tilde{b}_0)$
satisfying the separability condition.
\end{thm}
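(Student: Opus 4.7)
The strategy I would pursue is to descend from $\tilde{G}$ to a normal subgroup of $p$-power index whose Sylow $p$-subgroup is the cyclic group $P$, invoke the Rickard--Rouquier derived equivalence for cyclic-defect blocks, and then lift the resulting perfect isometry across the covering to produce the desired $\tilde{I}$.

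For the descent, I would first construct a normal subgroup $G \trianglelefteq \tilde{G}$ of index $p$ whose Sylow $p$-subgroup is exactly $P \cong C_{p^n}$. Since $\tilde{P} = P \rtimes C_p$ admits the quotient $\tilde{P}/P \cong C_p$, a focal subgroup/transfer argument produces such a $G$. The principal block $B_0$ of $kG$ then has cyclic defect group $P$ and is covered by $\tilde{B}_0$; similarly the principal block $b_0$ of $kN_G(P)$ is covered by $\tilde{b}_0$, with $N_{\tilde{G}}(P)/N_G(P)$ of $p$-power order.

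For the cyclic case, I would appeal to \cite{Ri1989-2, Rou1993} to obtain a derived equivalence between $B_0$ and $b_0$, and then to Brou\'{e}'s theorem (\cite[Proposition~4.10]{B1994}) to convert it into a perfect isometry $I \colon \mathbb{Z}\mathrm{Irr}(B_0) \to \mathbb{Z}\mathrm{Irr}(b_0)$. Finally, to lift $I$ to $\tilde{I}$, I would invoke Clifford theory along the $p$-power index extensions: each $\tilde{\chi} \in \mathrm{Irr}(\tilde{B}_0)$ restricts to a $\tilde{G}/G$-orbit of irreducibles in $B_0$, one applies $I$ orbit-wise, and reassembles using compatible choices of extensions to the inertia subgroups.

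The main obstacle is the compatibility of $I$ with the Clifford structure on both sides: the Brauer-tree derived equivalence of the cyclic case must be chosen $\tilde{G}/G$-equivariantly so that inertia groups, extensions, and Glauberman-type correspondences line up under $I$. For the weaker requirement of separability (rather than full perfect isometry), I would expect a direct verification at the level of $p$-regular characters and central characters to be feasible, using the explicit Brauer-tree description of the $B_0$-irreducibles and the $\tilde{G}/G$-action on them; this is where the bulk of the technical work would lie, and is presumably the core of the argument in \cite{HKK2010}.
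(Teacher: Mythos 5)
The paper does not actually prove this statement: Theorem~\ref{HKK-Theorem1.1} is quoted verbatim from \cite{HKK2010} and used as a black box, so there is no in-paper proof for your attempt to be measured against. Nevertheless, the paper's own remarks immediately after the theorem (the existence of a normal subgroup $G$ of index $p$ with cyclic Sylow $p$-subgroup, cited to \cite[Theorem~1.1(i)]{HKK2010}; the claim that $\tilde{I}$ is ``constructed from the perfect isometry $I$ between $B_0$ and $b_0$ naturally'') confirm that your overall strategy — descend to the cyclic-defect subgroup, invoke Rickard--Rouquier, convert the derived equivalence to a perfect isometry via Brou\'e, and lift along the $p$-power-index covering via Clifford theory — is indeed the route taken in \cite{HKK2010}.

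Two cautions are worth recording. First, you present the existence of the index-$p$ normal subgroup $G$ with $G \cap \tilde{P} = P$ as a routine focal subgroup/transfer argument; the paper treats this as a separate cited result (\cite[Theorem~1.1(i)]{HKK2010}), which suggests it is not entirely formal in the generality considered there, so your sketch should not understate this step. Second, and more importantly, you are right to flag that the lifting step only yields the separability condition and not integrality: the theorem as stated (and as quoted in the paper) asserts only an isometry satisfying separability, and the paper explicitly frames the full perfectness of $\tilde{I}$ as an open expectation rather than a known fact. Your last paragraph correctly locates the technical core — verifying separability on $p$-regular classes orbit-wise while tracking the Clifford-theoretic bookkeeping — so the proposal is a reasonable reconstruction of the cited argument even though the present paper contains no proof of its own.
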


In addition to the notation in Theorem \ref{HKK-Theorem1.1},
let $G$ be a normal subgroup such that
its index in $\tilde{G}$ is $p$ and that $G$ has
a Sylow $p$-subgroup isomorphic to $C_{p^n}$ 
(the existence of such a finite group is ensured by
\cite[Theorem 1.1 (i)]{HKK2010}).
Moreover, let $B_0$ be the principal block of $kG$ 
and $b_0$ the principal block of $kN_G(P)$.
The isometry $\tilde{I}$ 
in Theorem \ref{HKK-Theorem1.1} is constructed
from the perfect isometry $I$ between $B_0$ and $b_0$ naturally,
so we can strongly expect that the isometry 
$\tilde{I}$ is in fact a perfect isometry.
In particular, 
our interests are perfect isometries 
between $\tilde{B}_0$ and $\tilde{b}_0$ obtained by the ones between
$B_0$ and $b_0$ coming from the derived equivalence
between $B_0$ and $b_0$.
Indeed we know various derived equivalences
between $B_0$ and $b_0$ by \cite{Ri1989-2,RS2002,Ri1996,Koz-Ku2018,Koz2019}
because $B_0$ and $b_0$ have cyclic
groups isomorphic to $C_{p^n}$ as defect groups, which means 
they are Brauer tree algebras with the same number of 
simple modules and the same multiplicity (see \cite{Al86}).
Moreover it may be said that 
there are derived equivalences between $\tilde{B}_0$ and $\tilde{b}_0$
induced from the ones between $B_0$ and $b_0$
and that the perfect isometries are in fact induced by
the derived equivalences between $\tilde{B}_0$ and $\tilde{b}_0$.
%Moreover,
%we expect that,
%for 
%the perfect isometry $\tilde{I}$ between $\tilde{B}_0$ and $\tilde{b}_0$
%obtained from the perfect isometry $I$ between $B_0$ and $b_0$
%coming from the suitable derived equivalence between $B_0$ and $b_0$, 
%in fact 
%$\tilde{I}$ comes from some derived equivalence between 
%$\tilde{B}_0$ and $\tilde{b}_0$.
From these,
one of our ambitions is showing that the block $\tilde{B}_0$ is
derived equivalent to $\tilde{b}_0$ by using the one between
$B_0$ and $b_0$ we know a lot of examples of.
For the proof, it is essential 
to find a tilting complex over $\tilde{B}_0$ such that
its endomorphism algebra in the homotopy category
is Morita equivalent to $\tilde{b}_0$ by \cite[Theorem 6.4]{Ri1989-1}.
Hence it is important to classify the
tilting complexes over $\tilde{B}_0$ for our ambitions.

After that, in \cite{AI2012} Aihara-Iyama introduced  a
partial order on the set of all silting complexes
over $\Lambda$, where $\Lambda$ means a finite dimensional algebra
(see Definition \ref{parial-order}).
Moreover, if $\Lambda$ is a symmetric algebra,
then any silting complex over $\Lambda$ is 
a tilting complex over $\Lambda$ (see Proposition \ref{silting-tilting}).
Hence the set of all tilting complexes over the block $\tilde{B}$
has the partially ordered structure because
block algebras of finite groups are symmetric algebras.
Therefore our first aim is to give a classification of
tilting complexes over the block $\tilde{B}_0$.
On the other hand, in \cite{AI2012} they introduced 
silting mutations which are
operations producing other silting complexes from the ones,
and defined silting mutation quivers (Definition \ref{silting-quiver}).
In that paper, they show that 
the silting mutation quiver is equal to 
the Hasse quiver of all silting complexes on the partial order above
\cite[Theorem 2.35]{AI2012}.
Another aim of this paper is to give a further development of
the mutation theory of silting objects.
Starting with \cite{AI2012}, 
the mutation theory of silting objects
in the derived categories over finite-dimensional algebras
has been studied by many researchers,
but there are few studies of the theory for the modular representation theory.
Hence we aim at investigating the silting objects of the derived categories
over certain block algebras.
In particular, we consider the following two questions:
\begin{itemize}
\item
When is a block algebra silting(tilting)-discrete algebra
(Definition \ref{def-tilting-discrete})?
\item
When do two block algebras have
the same silting (tilting) mutation quiver?
\end{itemize}

The first question is one of the themes of silting theory,
and many researchers gave various examples of
silting-discrete algebras, which are characterized by the property that
all silting complexes are compact up to equivalence,
for example
Brauer graph algebras whose Brauer graphs have at most one
odd cycle and no cycle of even length \cite{AAC2018},
representation-finite piecewise hereditary algebras \cite{Ai2013},
representation-finite symmetric algebras \cite{Ai2013},
preprojective algebras of Dynkin type \cite{AiM2017},
derived-discrete algebras with finite global dimension \cite{BPP2016}
and
algebras of dihedral, semidihedral and quaternion type \cite{EJR2018} and more.
If we get a positive answer of the first question
and if we get a silting-discrete block, hence tilting-discrete block,
then we understand that all tilting complexes are connected
to each other by iterated mutation and this enables us to
understand of the derived category of
the modules over the block (see Theorem \ref{strongly-connected}).

Also the answer of the second question lets us enable to
understand the tilting complexes over a block algebra
from those over the other one.
Hence we may reduce the consideration of the easier block
to find a suitable tilting complex
when we want to find a suitable tilting complex.
Thus it makes sense to consider the above two questions.

On these questions, Koshio and the author gave a 
partial answer in \cite{KK2020}.
In the paper,
it is shown that the number of two-term tilting complexes
over $\tilde{B}_0$ is finite
and that the induction functor gives
the isomorphism as partially ordered sets
between two-term tilting complexes over
$B_0$ and the ones over $\tilde{B}_0$ by using $\tau$-tilting theory
introduced by Adachi-Iyama-Reiten \cite{AIR2013},
here two-term tilting complexes
means tilting complexes with vanishing cohomologies
in  degrees other than $0$ and $-1$.
%Moreover, in \cite{AIR2013} Adachi-Iyama-Reiten
%introduced support $\tau$-tilting modules.
%The set of off support $\tau$-tilting
%modules over a finite dimensional algebra $\Lambda$
%has a partially ordered set structure,
%and
%is isomorphic to two-term silting complexes over $\Lambda$
%as partially ordered sets.
%In particular, all support $\tau$-tilting modules
%over the block $\tilde{B}$ is isomorphic to 
%two-term tilting complexes over $\tilde{B}$.

\begin{thm}[{\cite[Theorem 1.2]{KK2020}}]\label{KK2020-thm}
Let $\tilde{G}$ be a finite group,
$G$ a normal subgroup such that 
the index $|\tilde{G} : G|$ is a $p$-power,
$k$ an algebraically closed field of characteristic $p>0$,
$B$ a block of $kG$, and
$\tilde{B}$ the unique block of $k\tilde{G}$
covering $B$.
Assume $\tilde{B}$ satisfies the following conditions:
\begin{enumerate}[(i)]
\item
Any indecomposable $B$-module is $I_{\tilde{G}}(B)$-invariant.
\item
$B$ is a $\tau$-tilting finite algebra,
that is $B$ has a finite number of two-term tilting complexes.
\end{enumerate}
Then
the induction functor $(-)^{\uparrow \tilde{G}}: {\rm mod}\:B
\rightarrow {\rm mod}\:\tilde{B}$ gives 
an isomorphism between the set of two-term tilting complexes
over $B$ and that of $\tilde{B}$
as the partially ordered sets.
\end{thm}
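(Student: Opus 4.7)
The plan is to translate the problem via the Adachi--Iyama--Reiten bijection into a statement about support $\tau$-tilting pairs, and then prove the corresponding bijection there. Set $n = [\tilde{G}:G]$, a $p$-power. For each $\Lambda \in \{B,\tilde{B}\}$, AIR gives a poset isomorphism between two-term tilting complexes over $\Lambda$ and basic support $\tau$-tilting pairs $(M,P)$, with the partial order corresponding to $\mathrm{Fac}\,M_1 \supseteq \mathrm{Fac}\,M_2$. It therefore suffices to show that induction $(-)^{\uparrow\tilde{G}}$ induces a poset isomorphism between support $\tau$-tilting pairs over $B$ and those over $\tilde{B}$.

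The central auxiliary fact is that, under assumption (i), induction carries indecomposable $B$-modules to indecomposable $\tilde{B}$-modules. This rests on the classical observation that, for a $\tilde{G}$-invariant indecomposable $kG$-module $M$ with $\tilde{G}/G$ a $p$-group in characteristic $p$, the endomorphism ring $\mathrm{End}_{k\tilde{G}}(M^{\uparrow\tilde{G}})$ is a crossed product of the local ring $\mathrm{End}_{kG}(M)$ with $\tilde{G}/G$, hence is local. Combined with the exactness of induction, its preservation of projectives, and the compatibility of induction with the Auslander--Reiten translation (since $B$ and $\tilde{B}$ are symmetric, $\tau$ agrees with $\Omega^2$ in the stable category, and induction is exact and preserves projectives, so $\tau_{\tilde{B}}(M^{\uparrow\tilde{G}}) \cong (\tau_B M)^{\uparrow\tilde{G}}$ stably), I then verify that inducing a support $\tau$-tilting pair $(M,P)$ over $B$ produces a support $\tau$-tilting pair $(M^{\uparrow\tilde{G}}, P^{\uparrow\tilde{G}})$ over $\tilde{B}$: the $\tau$-rigidity transfers from the above comparison, and the cardinality condition $|M^{\uparrow\tilde{G}}| + |P^{\uparrow\tilde{G}}| = |\tilde{B}|$ follows because Clifford theory together with assumption (i) yields a bijection between isomorphism classes of simple $B$- and simple $\tilde{B}$-modules.

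Injectivity of the induced map $\Phi$ is then immediate: by Mackey and assumption (i), $M_i^{\uparrow\tilde{G}}|_G \cong M_i^{\oplus n}$, so $M_1^{\uparrow\tilde{G}} \cong M_2^{\uparrow\tilde{G}}$ forces $M_1 \cong M_2$ by Krull--Schmidt. The main obstacle is surjectivity. My plan is to use mutation theory: assumption (ii) ensures that the support $\tau$-tilting quiver of $B$ is finite and connected, so every support $\tau$-tilting $B$-module arises from $B$ by iterated mutation. The key technical step is then to verify that induction commutes with silting mutation, i.e.\ that the mutation of $M^{\uparrow\tilde{G}}$ at the indecomposable summand $X^{\uparrow\tilde{G}}$ is the induction of the mutation of $M$ at $X$; this reduces to showing that induction takes minimal left/right approximations in $\mathrm{add}\,M$ to minimal left/right approximations in $\mathrm{add}\,M^{\uparrow\tilde{G}}$, which follows from exactness of induction together with the indecomposability-preservation above. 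Granted this, the image of $\Phi$ is a union of connected components of the $\tilde{B}$-mutation quiver containing $\tilde{B}$ itself, and since that quiver is connected, the image is all of it.

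Finally, poset compatibility follows from exactness of induction in one direction ($\mathrm{Fac}_B M_1 \supseteq \mathrm{Fac}_B M_2$ implies $\mathrm{Fac}_{\tilde{B}} M_1^{\uparrow\tilde{G}} \supseteq \mathrm{Fac}_{\tilde{B}} M_2^{\uparrow\tilde{G}}$) and from restricting to $G$ and using $M^{\uparrow\tilde{G}}|_G \cong M^{\oplus n}$ to recover the order on the $B$-side. I expect the compatibility of mutation with induction to be the step requiring the most care, because it is where assumption (i) must be used not just pointwise but for the whole $\mathrm{add}\,M$-envelope of each summand.
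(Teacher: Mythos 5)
Your proposal follows essentially the route the paper attributes to \cite{KK2020} for this theorem: transfer the problem via the Adachi--Iyama--Reiten bijection to support $\tau$-tilting pairs, use Green's indecomposability together with assumption (i), Frobenius reciprocity and Mackey's formula to show that induction preserves indecomposables, projective covers, minimal approximations, and hence mutations, and then run a finiteness/mutation argument for surjectivity. The same skeleton underlies the paper's own proofs of the generalizations, Theorem \ref{tilting-discrete} and Theorem \ref{poset-iso}, which bypass the module-level translation and work directly with tilting complexes in $K^b(\mathrm{proj}\,B)$ via Lemma \ref{approximation} and Proposition \ref{commute}; your module-level route is entirely adequate for the two-term statement, and buys nothing extra here beyond familiarity of setting, while the paper's complex-level formulation is what lets it treat all tilting complexes at once.

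One logical slip in your surjectivity step should be repaired. You assert that the support $\tau$-tilting exchange quiver of $\tilde{B}$ is connected and then conclude that the mutation-closed image of induction, which contains $\tilde{B}$, must be everything. But connectedness of that quiver is not available a priori; it is precisely the sort of thing one is trying to establish. The correct order of argument is the one the paper uses implicitly through Lemma \ref{graphlemma}: by $\tau$-tilting finiteness of $B$ (assumption (ii)), connectedness of the $B$-side quiver, the equality $|B|=|\tilde{B}|$ you derive from Clifford theory, and the compatibility of induction with irreducible mutation, the image is a \emph{finite}, $|B|$-regular, mutation-closed subquiver containing $\tilde{B}$, hence a finite connected component of the exchange quiver of $\tilde{B}$. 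One then invokes the result of Adachi--Iyama--Reiten (Corollary 3.10 in \cite{AIR2013}, or the Demonet--Iyama--Jasso refinement) that a finite connected component of the support $\tau$-tilting exchange quiver is already the whole quiver. With that adjustment the argument is sound, and the rest of your verification of $\tau$-rigidity transfer, the cardinality count, injectivity via Mackey restriction, and the order-compatibility via exactness of induction is correct.
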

Here, we remark that in the setting of Theorem \ref{HKK-Theorem1.1},
the blocks $B_0$ and $\tilde{B}_0$ satisfy the condition 
in Theorem \ref{KK2020-thm}.
Moreover, we remark that if an algebra is a tilting-discrete algebra,
then the number of two-term tilting complexes over the algebra is finite.
Thus the above theorem is a partial solution of the questions and
would be helpful for the solutions for the above questions.
In this paper we generalize Theorem \ref{KK2020-thm}
and give the following result
under the more general assumption
than the case in Theorem \ref{HKK-Theorem1.1}
in conjunction with giving positive answers of above questions.

\begin{thm}\label{mainthm1}(see Theorem \ref
{tilting-discrete} and Theorem \ref{poset-iso})
Let $\tilde{G}$ be a finite group,
$G$ a normal subgroup such that 
the index $|\tilde{G} : G|$ is a $p$-power,
$k$ an algebraically closed field of characteristic $p>0$,
$\tilde{B}$ a block of $k\tilde{G}$,
and $B$ the unique block of $kG$ covered by $\tilde{B}$.
Assume $\tilde{B}$ satisfies the following conditions:
\begin{enumerate}[(i)]
\item
Any indecomposable $B$-module is $I_{\tilde{G}}(B)$-invariant.
\item
$B$ is a tilting-discrete algebra.
\item
Any algebra derived equivalent to $B$
has a finite number of two-term tilting complexes.
\end{enumerate}
Then $\tilde{B}$ is a tilting-discrete algebra.
Moreover the induction functor $(-)\Ind : \hmtB
\rightarrow \hmttildeB $
induces an isomorphism 
between $\tilt B$ and $\tilt \tilde{B}$
as partially ordered sets,
here $\tilt B$ and $\tilt \tilde{B}$ mean the
set of all tilting complexes over $B$ and $\tilde{B}$
respectively. 
\end{thm}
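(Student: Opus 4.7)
The strategy is to traverse the tilting mutation quiver of $B$ step by step, lifting Theorem \ref{KK2020-thm} (which handles only two-term tilting complexes) to a correspondence at all heights. Hypothesis (ii) combined with the fact referenced as Theorem \ref{strongly-connected} — that the tilting mutation quiver of a tilting-discrete symmetric algebra is strongly connected — tells us that every $T\in\tilt B$ is reachable from the regular complex $B$ by a finite chain of irreducible left mutations. The crucial observation is that an irreducible mutation is an intrinsically two-term operation: if $T'=\mu^-(T)$, then under the standard derived equivalence $\hmtB\to K^b({\rm proj}\,\End_{\hmtB}(T))$ sending $T$ to the regular object, $T'$ becomes a two-term tilting complex over $\End_{\hmtB}(T)$. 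Hence iterating Theorem \ref{KK2020-thm} at each stage allows us to propagate the bijection outward from the initial pair $(B,\tilde{B})$ to every pair $(T,T\Ind)$.

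Concretely, I would define $\Phi:\tilt B\to\tilt\tilde{B}$ by $\Phi(T)=T\Ind$ and prove inductively along chains of irreducible mutations, starting from $\Phi(B)=\tilde{B}$, that $\Phi$ is well-defined, order-preserving, injective and surjective. The inductive step from $T$ to $T'=\mu^-(T)$ uses the pair $A:=\End_{\hmtB}(T)$ and $\tilde{A}:=\End_{\hmttildeB}(T\Ind)$, which are derived equivalent to $B$ and $\tilde{B}$ respectively. Applying an analogue of Theorem \ref{KK2020-thm} to $(A,\tilde{A})$ yields a poset isomorphism between two-term tilting complexes over $A$ and over $\tilde{A}$; transporting back across the derived equivalence, the irreducible mutations of $T$ in $\hmtB$ correspond bijectively and in an order-preserving way to the irreducible mutations of $T\Ind$ in $\hmttildeB$, so $\Phi$ extends to the neighbours of $T$ with the required properties. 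Surjectivity is handled symmetrically: condition (iii) guarantees that the two-term tilting locus above every stage $\tilde{T}\in\tilt\tilde{B}$ is finite, hence $\tilde{T}$ can be connected back to $\tilde{B}$ by a finite chain of mutations, and each step is pulled back through the two-term bijection. Tilting-discreteness of $\tilde{B}$ then follows at once from the poset isomorphism $\tilt B\cong\tilt\tilde{B}$, since $\Phi$ commutes with shift and preserves the distinguished object.

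The main obstacle is establishing the analogue of Theorem \ref{KK2020-thm} at every intermediate pair $(A,\tilde{A})$, where $A$ and $\tilde{A}$ are no longer block algebras of finite groups. Condition (iii) directly supplies $\tau$-tilting finiteness of $A$ at every stage, but the ``covering'' condition (i) and the very meaning of the induction functor $(-)\Ind$ must be reformulated across the derived equivalences $A\sim B$ and $\tilde{A}\sim\tilde{B}$. The natural route is to transport the crossed-product structure of $\tilde{B}$ over $B$ onto $\tilde{A}$ over $A$ via the chosen tilting complexes, verify that the invariance property on indecomposables is preserved by the derived equivalence (since indecomposable summands and their conjugations by coset representatives of $\tilde{G}/G$ are tracked functorially), and show that the resulting ``induction'' on module categories intertwines compatibly with the derived equivalences. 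Once this reformulation of Theorem \ref{KK2020-thm} in the derived-equivalent setting is in place, the inductive bookkeeping described above is routine; this transport of hypotheses is where the bulk of the technical effort of the proof should lie.
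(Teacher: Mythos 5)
Your high-level intuition — irreducible mutation is a two-term phenomenon, so a two-term statement should propagate along the mutation quiver via strong connectedness — does match the spirit of the paper, but the concrete route you propose has a genuine gap at its core, and the paper takes a different road precisely to avoid it.

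The gap is the ``analogue of Theorem \ref{KK2020-thm}'' at each intermediate pair $(A,\tilde A)$ with $A=\End_{\hmtB}(T)$ and $\tilde A=\End_{\hmttildeB}(T\Ind)$. Theorem \ref{KK2020-thm} is a statement about a pair of \emph{group blocks} with a crossed-product (covering) structure and the genuine group-theoretic induction functor; its proof in \cite{KK2020} relies on $\tau$-tilting theory for finite group algebras and on (Inv) for actual modules over a block. To run your induction you would need three nontrivial facts at every stage: that $\tilde A$ is a $\tilde G/G$-crossed product over $A$; that the induction functor $(-)\Ind$ on $\hmtB$ intertwines, under the derived equivalences induced by $T$ and $T\Ind$, with the module-level induction between ${\rm mod}\,A$ and ${\rm mod}\,\tilde A$; and that (Inv) holds for indecomposable $A$-modules. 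You flag this as the locus of technical effort, but none of it is carried out, and (b) in particular is delicate: the derived equivalence $\hmtB\to\hmtA$ is induced by $T$ while the one $\hmttildeB\to\hmttildeA$ is induced by $T\Ind$, and there is no a priori reason the induction functor closes up the resulting square. As written, the inductive step does not go through.

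The paper sidesteps all of this. Instead of transporting the covering structure to the endomorphism rings, it proves directly that $(-)\Ind$ is compatible with the mutation mechanism in $\hmtB$: Lemma \ref{approximation} shows that $(-)\Ind$ sends minimal left/right $\mathcal M$-approximations to minimal $\mathcal M\Ind$-approximations (using Frobenius reciprocity, Mackey's formula and (Inv) for complexes), whence Proposition \ref{commute} gives that $(-)\Ind$ sends tilting complexes to tilting complexes and commutes with irreducible mutations. The two-term input is then used only in a graph-theoretic way (Lemma \ref{graphlemma}): $2$-${\rm tilt}_T\,B$ and $2$-${\rm tilt}_{T\Ind}\,\tilde B$ are both $|B|$-regular Hasse quivers with the same top and bottom, and $(-)\Ind$ gives an order- and edge-preserving injection from a finite such graph, which forces it to be an isomorphism. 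Tilting-discreteness of $\tilde B$ then follows from the Aihara–Mizuno criterion (Theorem \ref{AiM2017}), and surjectivity of $(-)\Ind$ on $\tilt$ from strong tilting connectedness of $\tilde B$. This decomposition is what lets the paper avoid ever having to re-prove a covering/$\tau$-tilting statement for the non-block algebras $A$, $\tilde A$; in your version that re-proof is exactly the missing piece.

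One smaller imprecision: your surjectivity argument invokes finiteness of the two-term locus over $\tilde B$ ``by condition (iii),'' but condition (iii) is a hypothesis on algebras derived equivalent to $B$, not $\tilde B$; that finiteness has to be \emph{deduced} (it is, in the paper, the content of Theorem \ref{tilting-discrete}), so the order of quantifiers in your sketch is circular unless tilting-discreteness of $\tilde B$ is established first.
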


Now we return to the case in which we are interested,
that is, the covered block $B$ of $kG$ has a cyclic defect group.
Then the conditions (i), (ii) and (iii) of
Theorem \ref{mainthm1} are satisfied automatically (see Proposition \ref{cyclic-block}),
so we can apply Theorem \ref{mainthm1} to the block $B$.
Moreover, in the setting of Theorem \ref{mainthm1},
since the principal block $B_0$ of $kG$
is the unique block covered by the
principal block $\tilde{B}_0$ of $k\tilde{G}$,
the following theorem would be helpful for the consideration for 
the situation of Theorem \ref{HKK-Theorem1.1}.

\begin{thm}(see Theorem \ref{mainthm})
Let $\tilde{G}$ be a finite group having $G$
as a normal subgroup
with its index in $\tilde{G}$ a $p$-power.
Let $B$ be a block of the finite group $G$ with cyclic defect group
and $\tilde{B}$ the unique block of $k\tilde{G}$ covering $B$.
Then the following hold.
\begin{enumerate}
\item
$\tilde{B}$ is a tilting-discrete algebra.
\item
The induction functor $(-)\Ind : \tilt B \rightarrow \tilt \tilde{B}$
induces an isomorphism of partially ordered sets.
\end{enumerate}
\end{thm}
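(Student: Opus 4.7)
The strategy is to realise the statement as a direct application of Theorem \ref{mainthm1} to the pair $(B,\tilde{B})$. The theorem will follow immediately once we establish that a block $B$ with cyclic defect group automatically satisfies the three hypotheses (i), (ii) and (iii) of Theorem \ref{mainthm1}, independently of the particular covering $\tilde{G}/G$. This uniform verification is precisely the content of the promised Proposition \ref{cyclic-block}, so my plan reduces to proving that proposition.

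Hypotheses (ii) and (iii) should be essentially classical consequences of the cyclic hypothesis. By Dade's theorem, $B$ is Morita equivalent to a Brauer tree algebra, hence is a representation-finite symmetric algebra; by Aihara \cite{Ai2013}, every representation-finite symmetric algebra is tilting-discrete, giving (ii). For (iii) I would use that the class of Brauer tree algebras is closed under derived equivalence, with the number of edges and the multiplicity of the exceptional vertex preserved. Hence any algebra derived equivalent to $B$ is again a Brauer tree algebra, in particular representation-finite and $\tau$-tilting finite by \cite{AIR2013}, so it admits only finitely many two-term tilting complexes.

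The main obstacle is hypothesis (i), that every indecomposable $B$-module is $I_{\tilde{G}}(B)$-invariant. Since $|\tilde{G}:G|$ is a $p$-power, the quotient $I_{\tilde{G}}(B)/G$ is a $p$-group and acts on the isomorphism classes of indecomposable $B$-modules while preserving the Brauer tree $T$ of $B$ together with its multiplicity function and its distinguished exceptional vertex. Every non-projective indecomposable $B$-module is uniserial and is determined up to isomorphism by its top composition factor and its composition length, so it is enough to show that the induced $p$-action on the set of edges of $T$ (equivalently on the isomorphism classes of simple $B$-modules) is trivial. The key input is that the inertial index $e$ of $B$, which equals the number of edges of $T$, divides $p-1$ and is therefore coprime to $p$; combined with the planar structure of a Brauer tree (the cyclic ordering of edges around each vertex being preserved by any algebra automorphism) and the fact that the exceptional vertex is fixed, this should rule out non-trivial $p$-power actions and force the inertia quotient to act trivially on edges. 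Lifting the resulting triviality on simples to triviality on all indecomposables via the uniserial description then establishes (i).

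With Proposition \ref{cyclic-block} in hand, Theorem \ref{mainthm1} applies and delivers both assertions of the theorem at once: the tilting-discreteness of $\tilde{B}$, and the induced poset isomorphism $(-)\Ind\colon \tilt B \to \tilt \tilde{B}$, completing the proof.
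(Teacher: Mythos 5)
Your overall strategy coincides with the paper's: the theorem is proved by verifying the three hypotheses of Theorem~\ref{mainthm1} for a block with cyclic defect group (this is exactly Proposition~\ref{cyclic-block}) and then invoking Theorems~\ref{tilting-discrete} and~\ref{poset-iso}. Your verifications of hypotheses~(ii) and~(iii) match the paper's: cyclic defect gives a Brauer tree algebra, representation-finite symmetric implies tilting-discrete \cite{Ai2013}, and Rickard's classification of algebras derived equivalent to a Brauer tree algebra forces representation-finiteness and hence finitely many two-term tilting complexes for every such algebra.

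Where you diverge is hypothesis~(i). The paper does not reprove this; it simply cites \cite[Lemma 3.22]{KK2020} inside Proposition~\ref{cyclic-block}. You instead sketch a direct argument. The ingredients you identify are the right ones (the inertial index $e$, equal to the number of edges, divides $p-1$ so is coprime to $p$; algebra automorphisms respect the planar embedding, so vertex stabilizers act through a cyclic group of order dividing the vertex degree $\le e$; the exceptional vertex, when present, is fixed), and the passage from triviality on simples to triviality on all indecomposables via the uniserial classification is sound. However, the sketch as written has a gap at the crucial step ``this should rule out non-trivial $p$-power actions'': you need a fixed vertex to get the coprimality argument started, and for multiplicity $m=1$ there is no distinguished exceptional vertex. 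One must separately invoke that an odd-order group acting on a finite tree always fixes a vertex (Serre), and handle $p=2$ by noting $e\mid p-1=1$ forces $e=1$ so the action is trivially trivial. So your route is logically the same as the paper's, but you attempt to replace a citation by a self-contained proof that, as presented, is not yet watertight; either cite \cite[Lemma 3.22]{KK2020} as the paper does, or supply the missing fixed-point step and the $p=2$, $m=1$ cases.
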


%Also, as an immediate application of Theorem \ref{mainthm1},
%we can show that the homotopy category $\hmttildeB$
%satisfies a Bongartz-type Lemma.
%
%\begin{thm}\label{Bongartz-type}(see Theorem \ref{Bongartz})
%For the notation of Theorem \ref{mainthm1},
%any pretilting complex $\tilde{P}$ in $\hmttildeB$ is a partial tilting complex,
%that is, there exists a pretilting complex $\tilde{P'}$ in $\hmttildeB$
%such that $\tilde{P}\oplus \tilde{P'}$ is a tilting complex in $\hmttildeB$.
%\end{thm}

In this paper, we use the following notation and terminology.

Here, modules are finitely generated right modules unless otherwise stated.
For a finite group $G$, a subgroup $H$ of $G$
and a $kH$-module $U$,
we denote by $U^{\uparrow G}$ the induced module $U \otimes_{kH} kG$.

For a finite dimensional algebra $\Lambda$,
we denote by $|\Lambda|$ the number of 
the isomorphism classes of simple $\Lambda$-modules.
We denote by ${\rm mod}\:\Lambda$ the category of finitely generated
right $\Lambda$-modules,
and by ${\rm proj}\:\Lambda$ the category of finitely generated
projective $\Lambda$-modules.
We denote by $K^b({\rm proj}\:\Lambda)$
the bounded homotopy category of ${\rm proj}\:\Lambda$.
For $C \in K^b({\rm proj}\:\Lambda)$,
we denote by ${\rm add}\:C$
the smallest full subcategory of $K^b({\rm proj}\:\Lambda)$
which contains $C$
and is closed under finite co-products, summands
and isomorphisms.

This paper is organized as follows.
In Section \ref{Preliminaries}
we state several notation and results on
block theory and silting theory.
In Section \ref{mainsection}
we give the proof of the main theorems.
%Theorems \ref{mainthm1}
%and \ref{Bongartz-type}.

%%%%%%%%%%%%%%%%%%%%%%%%%%%%%%%%
%%%%%%%%%%%%%%%%%%%%%%%%%%%%%%%%
%%%%%%%%%%%%%%%%%%%%%%%%%%%%%%%%
%%%%%%%%%%%%%%%%%%%%%%%%%%%%%%%%
%%%%%%%%%%%%%%%%%%%%%%%%%%%%%%%%
%%%%%%%%%%%%%%%%%%%%%%%%%%%%%%%%
\section{Preliminaries}\label{Preliminaries}

%%%%%%%%%%%%%%%%%%%%%%%%%%%%%%%%
%%%%%%%%%%%%%%%%%%%%%%%%%%%%%%%%
%%%%%%%%%%%%%%%%%%%%%%%%%%%%%%%%

\subsection{Block theory}
In this section, let $k$ be an algebraically closed field
of characteristic $p>0$.
We denote by $G$ a finite group,
and by $k_G$ the trivial module of $kG$,
that is, a one-dimensional vector space on which
each element in $G$
acts as the identity.
We recall the definition of blocks of group algebras.
The group algebra $kG$ has a unique decomposition
$$ kG=B_1\times \cdots \times B_n $$
into a direct product of subalgebras $B_i$
each of which is indecomposable as an algebra.
Then each direct product component $B_i$
is called a block of $kG$.
For any indecomposble $kG$-module $M$,
there exists a unique block $B_i$ of $kG$
such that $M=MB_i$ and $MB_j=0$ for all $j \in \{1, \ldots, n\}-\{i\}$.
Then we say that $M$ lies in the block $B_i$ or
that $M$ is a $B_i$-module.
Also we denote by $B_0(G)$ the principal block of $kG$,
that is, the unique block of $kG$ which does not annihilate
the trivial $kG$-module $k_G$.

%%%%%%%%%%%%%%%%%%%%%%%%%%%%%%%%
%%%%%%%%%%%%%%%%%%%%%%%%%%%%%%%%
%%%%%%%%%%%%%%%%%%%%%%%%%%%%%%%%

First, we recall the definition of defect groups of blocks of
finite group algebras and their properties.

%%%%%%%%%%%%%%%%%%%%%%%%%%%%%%%%
%%%%%%%%%%%%%%%%%%%%%%%%%%%%%%%%
%%%%%%%%%%%%%%%%%%%%%%%%%%%%%%%%
\begin{define}
Let $B$ be a block of $kG$.
A minimal subgroup $D$ of $G$
which satisfies the following condition is uniquely determined
up to conjugacy in $G$:
the $B$-bimodule epimorphism
$$B\otimes_{kD} B \rightarrow B\:\:(b_1\otimes_{kD} b_2 \mapsto b_1b_2)$$
is a split epimorphism.
We call the subgroup a defect group of the block $B$.
\end{define}

The following results are well known (for example, see \cite{Al86}).
\begin{prop}
For the principal block $B_0(G)$ of $kG$,
its defect group is a Sylow $p$-subgroup of $G$.
\end{prop}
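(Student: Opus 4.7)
The plan is to verify directly that a Sylow $p$-subgroup $P$ of $G$ satisfies the two conditions in the definition of defect group for the principal block $B_0 = B_0(G)$: (a) the multiplication map $B_0 \otimes_{kP} B_0 \to B_0$ splits as $B_0$-bimodules, forcing a defect group $D$ to lie (up to conjugacy) inside $P$; and (b) no proper subgroup of $P$ works, so in fact $P \subseteq D$ up to conjugacy.

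For (a), I would use the Maschke-style averaging argument. Fix a left transversal $T$ of $P$ in $G$ and define
$$
s: kG \longrightarrow kG \otimes_{kP} kG, \qquad s(x) = \frac{1}{[G:P]} \sum_{t \in T} t^{-1} \otimes tx,
$$
where dividing by $[G:P]$ is legitimate because $[G:P]$ is coprime to $p$ and hence invertible in $k$. A routine check shows $s$ is independent of the choice of $T$ and is a $kG$-bimodule section of multiplication. Restricting via the block idempotent $e_0$ gives the desired $B_0$-bimodule splitting, and by minimality in the definition we obtain $D \subseteq P$ up to conjugacy.

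For (b), I would exploit the trivial module. Suppose a defect group $D$ satisfies $D \subseteq P$, and let $\sigma$ be a $B_0$-bimodule section of $B_0 \otimes_{kD} B_0 \to B_0$. Tensoring on the left over $B_0$ with any right $B_0$-module $M$ yields a splitting of the natural map $M \otimes_{kD} B_0 \to M$. Since $B_0$ is a left $kD$-summand of $kG$ (via the central block idempotent $e_0$), $M$ is also a summand of $M \otimes_{kD} kG$, i.e., $M$ is relatively $D$-projective. Applying this to $M = k_G$ (which lies in $B_0$ by definition of the principal block) shows that $k_G$ is a $kG$-summand of $\mathrm{Ind}_D^G k_D$. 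The $G$-fixed subspace of $\mathrm{Ind}_D^G k_D$ is one-dimensional, spanned by $v = \sum_{xD \in G/D} xD$, and the augmentation $\mathrm{Ind}_D^G k_D \twoheadrightarrow k_G$ sends $v$ to $[G:D]\cdot 1_k$. For a section $k_G \to \mathrm{Ind}_D^G k_D$ to exist this element must be nonzero, so $p \nmid [G:D]$; thus $|P|$ divides $|D|$, and combined with $D \subseteq P$ this gives $D = P$.

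The main obstacle is step (b): carefully translating the abstract bimodule splitting into the concrete statement that $k_G$ is a summand of $\mathrm{Ind}_D^G k_D$, and handling the block-versus-group-algebra passage through $e_0$. Once that is in place, the forcing of $[G:D]$ to be invertible in $k$ is a short fixed-point calculation.
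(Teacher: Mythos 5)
The paper offers no proof of this proposition at all---it is recorded as a well-known fact with a pointer to Alperin---so there is no argument of the paper to compare with; your proof is the standard self-contained one and is essentially correct. In part (a), with right modules the well-definedness of $s$ requires the sum to run over representatives of the cosets $Pt$ (so that $(ut)^{-1}\otimes utx=t^{-1}\otimes tx$ for $u\in P$); this is only a convention fix. The conclusion that a defect group lies in $P$ up to conjugacy deserves one explicit sentence: a minimal member of the family of subgroups of $P$ over which the multiplication map splits is automatically minimal among \emph{all} subgroups of $G$ with this property (any smaller subgroup with the property would again lie in $P$), and the uniqueness up to conjugacy built into the definition then identifies it with the defect group. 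Part (b) correctly converts the bimodule splitting into relative $D$-projectivity of every $B_0$-module, in particular of $k_G$, so $k_G$ is a direct summand of $k_D^{\uparrow G}$. The last step needs one more line than you give: by Frobenius reciprocity $\Hom_{kG}(k_D^{\uparrow G},k_G)\cong\Hom_{kD}(k_D,k_D)$ is one-dimensional, so any retraction onto the trivial summand is a scalar multiple of the augmentation $\varepsilon$; since the image of the section lies in the one-dimensional fixed space $kv$, the identity on $k_G$ factors as a nonzero scalar times $\varepsilon(v)=[G:D]\cdot 1_k$, which forces $p\nmid[G:D]$, hence $|P|$ divides $|D|$ and $D=P$. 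With these small justifications supplied, the argument is complete and matches the textbook proof the paper implicitly invokes.
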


\begin{prop}\label{cyclic-defect}
For a block $B$ of $kG$ and a defect group $D$ of $B$,
the following are equivalent:
\begin{enumerate}[(1)]
\item
$D$ is a non-trivial cyclic group;
\item
$B$ is of finite representation type and is not semisimple;
\item
$B$ is a Brauer tree algebra.
\end{enumerate}
\end{prop}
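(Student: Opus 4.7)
The proposition is a classical package of results in modular representation theory, and my plan is to establish the three implications $(1)\Rightarrow(3)\Rightarrow(2)\Rightarrow(1)$, each of which is a well-known theorem recorded in Alperin's book \cite{Al86}. I would treat the proof as an assembly of these results rather than a from-scratch argument, since each implication corresponds to a substantial classical theorem.

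For $(1)\Rightarrow(3)$, I would invoke Dade's structure theorem for blocks with cyclic defect group. The strategy is to analyze the Brauer correspondent $b$ of $B$ in $N_G(D)$: since $D$ is cyclic, $N_G(D)/C_G(D)$ embeds into $\operatorname{Aut}(D)$, and a careful analysis of the sources and vertices of the simple modules in $B$, together with the Green correspondence between $B$-modules and $b$-modules with vertex contained in $D$, yields the Brauer tree structure. I would simply cite Dade's theorem as presented in \cite{Al86} rather than reconstruct this technical argument.

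For $(3)\Rightarrow(2)$, I would recall the explicit classification of indecomposable modules over a Brauer tree algebra in terms of walks on the tree, which shows directly that such an algebra has only finitely many isomorphism classes of indecomposable modules. Non-semisimplicity is immediate from the fact that a Brauer tree algebra (with a nontrivial tree) has nonzero radical. For $(2)\Rightarrow(1)$, I would appeal to Higman's theorem: a block of a group algebra has finite representation type if and only if its defect group is cyclic. Non-semisimplicity of $B$ rules out the trivial defect group (since $D=1$ implies $B$ is a full matrix algebra over $k$, hence semisimple), forcing $D$ to be nontrivial.

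The main obstacle, if one were to prove this from first principles, would be the implication $(1)\Rightarrow(3)$, which is the deep structural result of Brauer and Dade; but since the paper's strategy is to \emph{apply} the Brauer tree structure to the covered block in subsequent sections, it is entirely appropriate here to present the proposition as a summary of known classical theorems with reference to \cite{Al86}, and this is the approach I would take.
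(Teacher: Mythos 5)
Your proposal is correct and matches the paper's treatment: the paper offers no proof at all, presenting the proposition as a well-known package of classical results with a reference to \cite{Al86}, which is exactly the citation-based assembly (Dade/Brauer for the tree structure, the classification of indecomposables over Brauer tree algebras, and Higman-type finiteness for blocks) that you describe.
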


%%%%%%%%%%%%%%%%%%%%%%%%%%%%%%%%
%%%%%%%%%%%%%%%%%%%%%%%%%%%%%%%%
%%%%%%%%%%%%%%%%%%%%%%%%%%%%%%%%
\begin{define}
Let $H$ be a subgroup of $G$.
For a $kH$-module $U$, we denote by $U^{\uparrow G}:=U\otimes_{kH}kG$
the induced module of $U$ from $H$ to $G$.
Also, for a complex $X=(X^i, d^i)$,
we denote by $X^{\uparrow G}$ the complex $(X^i\otimes_{kH}kG, d^i\otimes_{kH}kG)$.
This induces a functor from $K^b({\rm proj}\:kH)$
to $K^b({\rm proj}\:kG)$.
\end{define}

%%%%%%%%%%%%%%%%%%%%%%%%%%%%%%%%
%%%%%%%%%%%%%%%%%%%%%%%%%%%%%%%%
%%%%%%%%%%%%%%%%%%%%%%%%%%%%%%%%

We will consider the  case where $G$ is a normal subgroup of 
a finite group $\tilde{G}$
and has a $p$-power index.
The following is
the complex version of Green's indecomposability theorem.

\begin{prop}[see {\cite{G1959}}]\label{Green}
Let $G$ be a normal subgroup of
a finite group $\tilde{G}$ of a $p$-power index.
If $X$ is a bounded indecomposable complex of $kG$-modules,
then the induced complex $X^{\uparrow \tilde{G}}$
of $k\tilde{G}$-modules is also a bounded indecomposable 
complex of $k\tilde{G}$-modules.
\end{prop}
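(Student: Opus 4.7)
The plan is to reduce the indecomposability of $X^{\uparrow \tilde{G}}$ to locality of its endomorphism ring and to deduce the latter from the $p$-power assumption on $[\tilde{G}:G]$. Let $E := \End_{K^b({\rm mod}\,k\tilde{G})}(X^{\uparrow \tilde{G}})$. Since $k\tilde{G}$ is finite-dimensional, the bounded homotopy category $K^b({\rm mod}\,k\tilde{G})$ is Krull--Schmidt (finite-dimensional Hom spaces plus split idempotents), so $X^{\uparrow \tilde{G}}$ is indecomposable if and only if $E$ is local.

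Because $G$ has finite index in $\tilde{G}$, induction is both left and right adjoint to restriction, so Frobenius reciprocity applies. Combined with the Mackey decomposition $X^{\uparrow \tilde{G}}\downarrow_G \cong \bigoplus_{g \in \tilde{G}/G} {}^g X$ (applied termwise and functorially to the complex $X$), this yields a natural isomorphism
\[
E \;\cong\; \Hom_{K^b({\rm mod}\,kG)}\bigl(X,\, X^{\uparrow \tilde{G}}\downarrow_G\bigr) \;\cong\; \bigoplus_{g \in \tilde{G}/G} \Hom_{K^b({\rm mod}\,kG)}(X,\,{}^g X).
\]
Composition endows $E$ with a $(\tilde{G}/G)$-graded algebra structure whose identity component $\End_{K^b({\rm mod}\,kG)}(X)$ is local, because $X$ is indecomposable in the Krull--Schmidt category $K^b({\rm mod}\,kG)$.

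To finish, I would invoke the following lemma: any finite-dimensional $P$-graded $k$-algebra with local identity component is itself local whenever $P$ is a $p$-group and $\text{char}\,k = p$. Setting $J := \rad(E_e)$ and $\bar{T} := \{g \in P : X \cong {}^g X \text{ in } K^b({\rm mod}\,kG)\}$ (a subgroup of $P = \tilde{G}/G$, hence again a $p$-group), I would define
\[
I \;:=\; \Bigl(\bigoplus_{g \notin \bar{T}} E_g\Bigr) \;\oplus\; \Bigl(\bigoplus_{g \in \bar{T}} J\cdot E_g\Bigr) \;\subseteq\; E,
\]
show that $I$ is a two-sided nilpotent ideal, and identify $E/I$ with a twisted group algebra $k^\alpha[\bar{T}]$. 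Since $k^\times$ has no $p$-torsion in characteristic $p$ and $\bar{T}$ is a $p$-group, $H^2(\bar{T},k^\times) = 0$, so $E/I \cong k[\bar{T}]$ is local; together with the nilpotency of $I$ (which places it inside $\rad(E)$), this forces $E$ to have only trivial idempotents and hence to be local.

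The main obstacle is verifying that $I$ is closed under multiplication and that the passage to $E/I$ genuinely yields a twisted group algebra on $\bar{T}$. The key observation is that for $g \notin \bar{T}$, any composition $\psi\circ\phi$ with $\phi \in E_g$ and $\psi \in E_{g^{-1}}$ must lie in $J$: otherwise the Krull--Schmidt structure of $K^b({\rm mod}\,kG)$ would exhibit ${}^g X$ as a direct summand of $X$, contradicting $g \notin \bar{T}$. Once these compatibilities are in place, the argument is a direct adaptation of Green's original module-theoretic proof \cite{G1959} to the bounded homotopy category, which is permissible because induction, restriction and conjugation are all additive, exact, and triangle-preserving.
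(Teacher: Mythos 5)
Your proposal is correct and takes essentially the same route as the paper: the paper's entire proof is the remark that the classical endomorphism-ring proof of Green's indecomposability theorem (\cite[Theorem 8.8]{Al86}, going back to \cite{G1959}) works verbatim for bounded complexes, and your graded-endomorphism-algebra argument (Mackey/Frobenius decomposition, inertia subgroup $\bar{T}$, nilpotent ideal $I$, twisted group algebra of a $p$-group in characteristic $p$) is exactly that classical argument carried out in the Krull--Schmidt homotopy category. The steps you leave as tasks (idealness and nilpotency of $I$, the identification $E/I\cong k^{\alpha}[\bar{T}]$) do go through as indicated, since every homogeneous element of $I$ is a radical morphism between indecomposables and hence long products vanish in the finite-dimensional algebra $\End\bigl(\bigoplus_{g}{}^{g}X\bigr)$, so there is no gap.
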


\begin{proof}
The proof of \cite[Theorem 8.8]{Al86} works for complexes,
hence we get the result.
\end{proof}

\begin{prop}[{\cite[Corollary 5.5.6]{NT1989}}]\label{cover}
Let $G$ be a normal subgroup of $\tilde{G}$,
and $B$ a block of $G$.
If the index of $G$ in $\tilde{G}$ is a $p$-power,
then there exists a unique block of $k\tilde{G}$ covering $B$.
\end{prop}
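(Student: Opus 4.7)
The plan is to handle existence by an orbit sum of $1_B$ and uniqueness by a Fong--Reynolds reduction to a crossed-product indecomposability statement, which is then handled by induction on the order of the $p$-group $\tilde{G}/G$.

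For \emph{existence}, let $H := I_{\tilde{G}}(B)$ be the inertia group of $B$; the element $e := \sum_{x \in \tilde{G}/H} x 1_B x^{-1}$ is a sum of pairwise orthogonal conjugates of the central primitive idempotent $1_B$, hence a nonzero central idempotent of $k\tilde{G}$, and decomposing it into primitive central idempotents of $k\tilde{G}$ produces at least one block covering $B$. For \emph{uniqueness}, the Fong--Reynolds correspondence gives a bijection between blocks of $kH$ covering $B$ and blocks of $k\tilde{G}$ covering $B$; since $|H:G|$ divides the $p$-power $|\tilde{G}:G|$, I may replace $\tilde{G}$ by $H$ and assume $B$ is $\tilde{G}$-invariant. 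Then $1_B \in Z(k\tilde{G})$, so $A := 1_B k\tilde{G}$ is a direct algebra factor of $k\tilde{G}$, and choosing coset representatives presents $A = \bigoplus_{\bar{x} \in \tilde{G}/G} Bx$ as a crossed product of the indecomposable algebra $B$ with the finite $p$-group $P := \tilde{G}/G$; the blocks of $k\tilde{G}$ covering $B$ are precisely the indecomposable algebra factors of $A$, so uniqueness becomes indecomposability of $A$.

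I would prove this indecomposability by induction on $|P|$. For the inductive step take a normal intermediate subgroup $G_0$ with $|G_0:G|=p$, which exists because $P$ has a central subgroup of order $p$; the base case applied to $G \vartriangleleft G_0$ gives a unique (hence $\tilde{G}$-invariant) block $B_0$ of $kG_0$ covering $B$, and the induction hypothesis applied to $G_0 \vartriangleleft \tilde{G}$ then gives a unique block of $k\tilde{G}$ covering $B_0$, which by transitivity of covering is the unique block of $k\tilde{G}$ covering $B$. The base case $|P|=p$ reduces to verifying that $Z(A)$ is local: a central idempotent $z = \sum_{i=0}^{p-1} b_i g^i$ of $A$ (for $g$ a lift of a generator of $P$) must satisfy ${}^g b_i = b_i$ and $x b_i = b_i \cdot {}^{g^i}\! x$ for all $x \in B$, so the degree-zero component $b_0$ lies in $Z(B)^{\langle g \rangle}$, which is local because invariants of a local algebra under a finite $p$-group action in characteristic $p$ remain local; the higher components $b_i$ for $i \ge 1$ are forced into twisted bimodules that contribute only to the nilradical of $Z(A)$ by the cocycle constraint coming from $z^2 = z$. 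The main obstacle is exactly this base case, where the characteristic-$p$ hypothesis on $|P|$ is essential---the analogous statement fails if $\tilde{G}/G$ were a $p'$-group---and the full argument is given in \cite[Corollary 5.5.6]{NT1989}.
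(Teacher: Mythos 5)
The paper does not supply a proof of this proposition; it is quoted verbatim with a citation to \cite[Corollary 5.5.6]{NT1989}, so there is no internal argument to compare against, and I can only evaluate your sketch on its own terms. Your existence argument and your reduction scheme (Fong--Reynolds to pass to the inertial group, then induction on $|P|$ through a chain of normal subgroups of index $p$) are both sound. The problem is the base case $|P|=p$, which is the entire mathematical content. You assert that the homogeneous components $Z(A)_i$ with $i\neq 0$ ``contribute only to the nilradical of $Z(A)$,'' but this is simply not true: take $G=1$, $B=k$, so $A=k[\mathbb{Z}/p]$; then $Z(A)_1=kg$ with $g$ a unit of order $p$, certainly not nilpotent. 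So the asserted mechanism fails, and nothing in your sketch shows $Z(A)$ is local. (The nilradical of a $\mathbb{Z}/p$-graded commutative ring need not even be a graded ideal, so ``lands in the nilradical'' is not a well-posed degree-by-degree statement.) You also locate the role of the characteristic-$p$ hypothesis in the wrong place: $Z(B)^{\langle g\rangle}$ is local for \emph{any} finite group of automorphisms of a finite-dimensional local commutative $k$-algebra with residue field $k$, regardless of order or characteristic; the $p$-power/char $p$ assumption has to enter elsewhere.

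A correct completion of your graded approach is available: since $\operatorname{char}k=p$ and the grading is by $\mathbb{Z}/p$, the Frobenius map $a\mapsto a^p$ is a ring endomorphism of $Z(A)$ whose image lies in $Z(A)_0=Z(B)^{\langle g\rangle}$. Any $k$-algebra homomorphism $Z(A)\to k$ restricts to the unique such homomorphism on the local ring $Z(A)_0$, hence has uniquely determined $p$-th powers $\chi(a)^p=\chi(a^p)$; since $k$ is perfect this determines $\chi$ completely, so $Z(A)$ has at most one maximal ideal and is local. (This is where the $p$-group and characteristic-$p$ hypotheses actually do the work.) For comparison, the standard argument in the literature avoids the crossed-product grading altogether: primitive central idempotents are supported on $p$-regular classes (Osima), and every $p$-regular element of $\tilde{G}$ lies in $G$ because $\tilde{G}/G$ is a $p$-group; hence every block idempotent of $k\tilde{G}$ covering $B$ lies in $Z(kG)^{\tilde{G}}$, and after projecting to the orbit of $B$, in the local ring $Z(B)^{I_{\tilde{G}}(B)}$, forcing it to equal $\sum_{B'\in \tilde{G}\text{-orbit of }B}1_{B'}$. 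That route needs neither Fong--Reynolds nor induction.
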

%%%%%%%%%%%%%%%%%%%%%%%%%%%%%%%%
%%%%%%%%%%%%%%%%%%%%%%%%%%%%%%%%
%%%%%%%%%%%%%%%%%%%%%%%%%%%%%%%%
\begin{rem}\label{induced-complex}
Let $G$ be a normal subgroup of a finite group
$\tilde{G}$ of a $p$-power index,
$B$ a block of $kG$, and $\tilde{B}$ the unique block
of $k\tilde{G}$ covering $B$.
Then by Propositions \ref{Green} and \ref{cover},
for any indecomposable complex $X$ of $K^b({\rm proj}\:B)$,
we can easily show that
the induced complex $X^{\uparrow \tilde{G}}$
is an indecomposable complex
of $K^b({\rm proj}\:\tilde{B})$.
\end{rem}

%%%%%%%%%%%%%%%%%%%%%%%%%%%%%%%%
%%%%%%%%%%%%%%%%%%%%%%%%%%%%%%%%
%%%%%%%%%%%%%%%%%%%%%%%%%%%%%%%%

\subsection{Silting mutations}
In this section, $\Lambda$ means a
finite-dimensional algebra unless otherwise stated.
We say that a complex $P$ of $K^b({\rm proj}\:\Lambda)$
is basic if $P$ is isomorphic to a direct sum
of indecomposable complexes which are mutually
non-isomorphic.
%%%%%%%%%%%%%%%%%%%%%%%%%%%%%%%%
%%%%%%%%%%%%%%%%%%%%%%%%%%%%%%%%
%%%%%%%%%%%%%%%%%%%%%%%%%%%%%%%%
\begin{define}
Let $P$ be a complex of $K^b({\rm proj}\:\Lambda)$.
We say that the complex is a silting complex
(or tilting complex, respectively)
if the following conditions are satisfied:
\begin{enumerate}[(1)]
\item
$\Hom_{K^b({\rm proj}\:\Lambda)}(P, P[n])=0$ for any 
$n>0$ (for any $n \neq 0$, respectively).
\item
It holds that ${\rm thick}\,P = K^b({\rm proj}\:\Lambda)$,
where ${\rm thick}\,P$ is the smallest thick subcategory containing ${\rm add}\:P$.
\end{enumerate}
\end{define}

%%%%%%%%%%%%%%%%%%%%%%%%%%%%%%%%
%%%%%%%%%%%%%%%%%%%%%%%%%%%%%%%%
%%%%%%%%%%%%%%%%%%%%%%%%%%%%%%%%
By the above definition,
it holds that tilting complexes are silting complexes,
but the converse does not hold generally.
By the following proposition, the silting complexes over 
symmetric algebras are tilting complexes.
In particular, silting complexes over block algebras are tilting complexes.

\begin{prop}[{\cite[Example 2.8]{AI2012}}]\label{silting-tilting}
If $\Lambda$ is a finite-dimensional symmetric algebra,
then any silting complex over $\Lambda$ is
a tilting complex.
\end{prop}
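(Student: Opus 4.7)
The plan is to observe that the only gap between the silting and tilting conditions is the vanishing of $\Hom_{K^b(\mathrm{proj}\,\Lambda)}(P,P[n])$ for $n<0$ (the thick-subcategory condition is identical), and to close that gap using Serre duality on the perfect derived category of a symmetric algebra. Concretely, I would prove the natural isomorphism
$$\Hom_{K^b(\mathrm{proj}\,\Lambda)}(X,Y)\cong D\Hom_{K^b(\mathrm{proj}\,\Lambda)}(Y,X)$$
for all $X,Y\in K^b(\mathrm{proj}\,\Lambda)$, where $D=\Hom_k(-,k)$. Apply it to $X=P$ and $Y=P[n]$ with $n<0$: then
$$\Hom(P,P[n])\cong D\Hom(P[n],P)=D\Hom(P,P[-n]),$$
and the right-hand side vanishes because $-n>0$ and $P$ is silting.

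The construction of the Serre-duality isomorphism proceeds in three steps. First, the symmetry hypothesis furnishes an isomorphism of $\Lambda$-bimodules $\Lambda\cong D\Lambda$; applied to the primitive idempotent decomposition $1=\sum e_i$, this yields the ``module-level'' duality
$$\Hom_\Lambda(e_i\Lambda,e_j\Lambda)=e_j\Lambda e_i\cong D(e_i\Lambda e_j)=D\Hom_\Lambda(e_j\Lambda,e_i\Lambda),$$
which is natural in both variables and extends by additivity to arbitrary finitely generated projectives. Second, for two bounded complexes $X,Y$ of projectives I would combine these componentwise isomorphisms into a natural isomorphism of Hom-complexes $\mathrm{Hom}^\bullet(X,Y)\cong D\mathrm{Hom}^\bullet(Y,X)[?]$ at the level of double complexes; taking cohomology in degree $0$ (and using that duality is exact) gives the required isomorphism in the homotopy category. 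Third, one checks that no shift appears: indeed for $X=Y=\Lambda$ one has $\Hom(\Lambda,\Lambda)=\Lambda\cong D\Lambda=D\Hom(\Lambda,\Lambda)$ with no shift, and the general complex case follows by dévissage.

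The main obstacle, such as it is, lies in step two: keeping track of signs and indices when passing from the bimodule-level duality to a natural isomorphism at the level of Hom-complexes, and verifying that the isomorphism respects the shift $[n]$ in the expected way so that the final calculation $\Hom(P,P[n])\cong D\Hom(P,P[-n])$ is correct. Once that bookkeeping is in place, the conclusion is immediate: the silting hypothesis already supplies $\Hom(P,P[m])=0$ for all $m>0$, and Serre duality transports this vanishing to all $m<0$, so $P$ satisfies the full tilting condition.
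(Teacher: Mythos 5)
Your proof is correct and is exactly the standard argument: the paper itself gives no proof, citing only \cite[Example 2.8]{AI2012}, and the Serre-duality computation you sketch (using $\Lambda\cong D\Lambda$ as bimodules to get $\Hom(X,Y)\cong D\Hom(Y,X)$ on $K^b(\mathrm{proj}\,\Lambda)$, then transporting the positive-degree vanishing of a silting complex to negative degrees) is precisely the argument in that reference.
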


%%%%%%%%%%%%%%%%%%%%%%%%%%%%%%%%
%%%%%%%%%%%%%%%%%%%%%%%%%%%%%%%%
%%%%%%%%%%%%%%%%%%%%%%%%%%%%%%%%
In \cite[Definition 2.10, Theorem 2.11]{AI2012},
it is shown that there is a partial order on the set of
silting complexes.
\begin{define}\label{parial-order}
Let $P$ and $Q$ be silting complexes of $K^b({\rm proj}\:\Lambda)$.
We define a relation $\geq$ between $P$ and $Q$ as follows; 
$$P \geq Q :\Leftrightarrow \Hom_{K^b({\rm proj}\:\Lambda)}(P, Q[i])=0
\:(^\forall i >0).$$
Then the relation $\geq$ gives a partial order on $\silt\,\Lambda$,
where $\silt\,\Lambda$ means the set of isomorphism classes of 
basic silting complexes over $\Lambda$.
\end{define}
%%%%%%%%%%%%%%%%%%%%%%%%%%%%%%%%
%%%%%%%%%%%%%%%%%%%%%%%%%%%%%%%%
%%%%%%%%%%%%%%%%%%%%%%%%%%%%%%%%

We recall the definition of mutations for silting complexes of $K^b({\rm proj}\:\Lambda)$
\cite[Definition 2.30, Theorem 3.1]{AI2012}.

\begin{define}
Let $P$ be a basic silting complex of $K^b({\rm proj}\:\Lambda)$
and decompose it as $P=X \oplus M$.
We take a triangle
$$ X \xrightarrow{f} M' \rightarrow Y \rightarrow $$
with a minimal left $({\rm add}\,M)$-approximation $f$ of $X$.
Then the complex $\mu_X^{-}(P):=Y\oplus M$ is 
a silting complex in $K^b({\rm proj}\:\Lambda)$ again.
We call the complex $\mu_X^{-}(P)$ a left mutation of $P$ with respect to $X$.
If $X$ is indecomposable, then we say that the left mutation 
is irreducible.
We define the (irreducible) right mutation $\mu^+_X(P)$ dually.
Mutation will mean either left or right mutation.
\end{define}

\begin{rem}
If $\Lambda$ is a finite-dimensional symmetric algebra,
then, for any tilting complex $P=X\oplus M$ over $\Lambda$,
the complex $\mu^{\epsilon}_X(P)$ is a tilting complex again
by Proposition \ref{silting-tilting},
where $\epsilon$ means $+$ or $-$.
\end{rem}

%%%%%%%%%%%%%%%%%%%%%%%%%%%%%%%%
%%%%%%%%%%%%%%%%%%%%%%%%%%%%%%%%
%%%%%%%%%%%%%%%%%%%%%%%%%%%%%%%%

The silting mutation quiver was introduced by \cite[Definition 2.41]{AI2012}.
\begin{define}[{\cite[DEFINITION 2.41]{AI2012}}]\label{silting-quiver}
The silting mutation quiver of $K^b({\rm proj}\:\Lambda)$
is defined as follows:
\begin{itemize}
\item
The set of vertices is $\silt\,\Lambda$.
\item
We draw an arrow from $P$ to $Q$ if $Q$ is an irreducible left mutation of $P$. 
\end{itemize}
\end{define}

%%%%%%%%%%%%%%%%%%%%%%%%%%%%%%%%
%%%%%%%%%%%%%%%%%%%%%%%%%%%%%%%%
%%%%%%%%%%%%%%%%%%%%%%%%%%%%%%%%

The following proposition shows that 
the Hasse quiver 
of the partially ordered set $\silt\,\Lambda$
is exactly the silting mutation quiver of $K^b({\rm proj}\:\Lambda)$.

\begin{thm}[{\cite[Theorem 2.35]{AI2012}}]\label{Theorem 2.35}
For any silting complexes $P$ and $Q$ over $\Lambda$,
the following conditions are equivalent:
\begin{enumerate}[(1)]
\item
$Q$ is an irreducible left mutation of $P$;
\item
$P$ is an irreducible right mutation of $Q$;
\item
$P>Q$ and there is no silting complex $L$
satisfying $P>L>Q$.
\end{enumerate}
\end{thm}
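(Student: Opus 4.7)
The plan is to prove the three-way equivalence by first establishing the duality $(1)\Leftrightarrow (2)$, and then closing the loop through $(1)\Rightarrow (3)\Rightarrow (1)$. The common thread is that the approximation triangle $X \xrightarrow{f} M' \xrightarrow{g} Y \xrightarrow{h} X[1]$ defining a left mutation is essentially self-dual once one knows $\Hom(M, X[1]) = 0$, and that the partial order $\geq$ is controlled by the long exact sequences obtained by applying $\Hom$-functors to this triangle.

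For $(1)\Leftrightarrow (2)$, I would start from the triangle above with $f$ a minimal left $({\rm add}\,M)$-approximation of the indecomposable complex $X$. Applying $\Hom(N,-)$ for $N \in {\rm add}\,M$ and invoking $\Hom(M, X[1]) \subset \Hom(P, P[1]) = 0$ (which holds because $P$ is silting), the map $g$ becomes a right $({\rm add}\,M)$-approximation of $Y$. A Krull--Schmidt/minimality argument, using left minimality of $f$ and indecomposability of $X$, then shows that $g$ is in fact a minimal right $({\rm add}\,M)$-approximation and that $Y$ is indecomposable, so $\mu_Y^+(Q) = X \oplus M = P$, giving $(2)$. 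The converse is identical, run on the rotated triangle.

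For $(1)\Rightarrow (3)$, to show $P > Q$ I would apply $\Hom(X, -)$ and $\Hom(M, -)$ to the defining triangle: for $i > 0$, the outer terms $\Hom(?, M'[i])$ and $\Hom(?, X[i+1])$ vanish by the silting hypothesis on $P$, hence so does $\Hom(?, Y[i])$; combining with $\Hom(P, M[i])=0$ gives $\Hom(P, Q[i]) = 0$, and strictness follows because $X \notin {\rm add}\,Q$. For the non-existence of an intermediate silting $L$, suppose $P > L > Q$; the inequalities $P\geq L\geq Q$ together with the silting axioms force $L$ to share $M$ as a summand, while its remaining part must be sandwiched between $X$ and $Y$ via the triangle. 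The hom-vanishing constraints $\Hom(P,L[i])=\Hom(L,Q[i])=0$ ($i>0$) then collapse $L$ to $P$ or to $Q$, a contradiction.

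The direction $(3)\Rightarrow (1)$ is where I expect the main difficulty, since one must manufacture both the indecomposable summand $X$ and the approximation triangle purely from the covering relation. The strategy is to let $M$ be a basic complex with ${\rm add}\,M = {\rm add}\,P \cap {\rm add}\,Q$, so that $P = X \oplus M$ and $Q = Y \oplus M$ with $X, Y$ having no summand in ${\rm add}\,M$; using $P > Q$ and the silting axioms, one then constructs a triangle $X \to M' \to Y \to X[1]$ in which $M' \to Y$ is a right $({\rm add}\,M)$-approximation. The delicate point is showing that $X$ is indecomposable: if $X = X_1 \oplus X_2$ nontrivially, the partial left mutation $\mu_{X_1}^-(P)$ ought to be a silting complex strictly between $P$ and $Q$, contradicting $(3)$. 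Verifying carefully that $\mu_{X_1}^-(P)$ is silting, satisfies the two strict inequalities, and is distinct from both $P$ and $Q$ is the key technical obstacle, and will rely on the same long-exact-sequence machinery used for $(1)\Rightarrow(3)$, applied now to the partial triangle.
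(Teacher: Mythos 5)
First, note that this paper does not prove the statement at all: it is quoted verbatim from Aihara--Iyama and used as a black box (\cite[Theorem 2.35]{AI2012}), so your attempt can only be measured against the original argument, not against anything in this text. Your outline of $(1)\Leftrightarrow(2)$ and of $P>Q$ in $(1)\Rightarrow(3)$ follows the expected lines (the approximation triangle, the vanishing $\Hom(M,X[1])\subset\Hom(P,P[1])=0$, and the long exact sequences). But already in $(1)\Rightarrow(3)$ the ``no intermediate $L$'' step is asserted rather than performed: that $P\geq L\geq Q$ forces the common summand $M$ into ${\rm add}\,L$ rests on the nontrivial fact that a silting object is maximal among presilting objects, and ``collapsing'' the remaining indecomposable summand of $L$ to $X$ or to $Y$ is exactly where the degree-one $\Hom$-groups must be controlled; the sentence ``the hom-vanishing constraints then collapse $L$ to $P$ or to $Q$'' names the conclusion, not an argument.

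The genuine gap is in $(3)\Rightarrow(1)$, which you yourself flag as the main difficulty. Two concrete problems. (a) The triangle $X\to M'\to Y\to X[1]$ linking the complements of the maximal common summand, with $M'\to Y$ a right $({\rm add}\,M)$-approximation, is precisely the statement to be proved: for a non-covering pair $P>Q$ with the same ${\rm add}\,P\cap{\rm add}\,Q$ no such triangle exists in general (e.g.\ when $Q$ is obtained from $P$ by two successive mutations at the same position), so its construction must use the covering hypothesis, and you give no mechanism for it; moreover, even granting $X$ indecomposable you would still have to identify $Y$ with the cone of a \emph{minimal left} $({\rm add}\,M)$-approximation of $X$, i.e.\ prove $Q\cong\mu_X^-(P)$, which the sketch never addresses. (b) The indecomposability-by-contradiction step assumes that $\mu_{X_1}^-(P)$ lies strictly between $P$ and $Q$, but $\mu_{X_1}^-(P)\geq Q$ is not automatic: applying $\Hom(-,Q)$ to the triangle $X_1\to M''\to Y_1\to X_1[1]$ (with $M''$ the minimal left approximation of $X_1$ in the additive closure of its complement) one finds that $\Hom(Y_1,Q[i])=0$ for $i\geq 2$, while $\Hom(Y_1,Q[1])$ is a quotient of $\Hom(X_1,Q)$, which vanishes only when every morphism $X_1\to Q$ factors through that approximation. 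So ``$\mu_{X_1}^-(P)$ ought to be a silting complex strictly between $P$ and $Q$'' is unsupported, and with it the whole direction. To complete the argument one needs to produce, from $P>Q$ alone, an indecomposable summand $X$ of $P$ with the factorization property just described, so that $P>\mu_X^-(P)\geq Q$ and the covering hypothesis forces $\mu_X^-(P)\cong Q$; supplying that existence statement is the actual content of the theorem and is missing from your proposal.
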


%%%%%%%%%%%%%%%%%%%%%%%%%%%%%%%%
%%%%%%%%%%%%%%%%%%%%%%%%%%%%%%%%
%%%%%%%%%%%%%%%%%%%%%%%%%%%%%%%%

From now on, we assume that $\Lambda$ is a
finite-dimensional symmetric algebra unless otherwise stated.
In particular, any silting complex over $\Lambda$
is a tilting complex over $\Lambda$ by Proposition \ref{silting-tilting}.

%%%%%%%%%%%%%%%%%%%%%%%%%%%%%%%%
%%%%%%%%%%%%%%%%%%%%%%%%%%%%%%%%
%%%%%%%%%%%%%%%%%%%%%%%%%%%%%%%%

We recall the definition of tilting-discrete algebras.
\begin{define}\label{def-tilting-discrete}
We say that an algebra (which is not necessarily a symmetric algebra) 
$\Lambda$ is a tilting-discrete algebra
if for all $\ell >0$ and any tilting complex $P$ over $\Lambda$,
the set 
$$ \{ T\in \tilt\,\Lambda\:|\: P\geq T \geq P[\ell] \} $$
is a finite set,
where $\tilt\,\Lambda$ means the set of isomorphism classes 
of basic tilting complexes over $\Lambda$.
\end{define}

%%%%%%%%%%%%%%%%%%%%%%%%%%%%%%%%
%%%%%%%%%%%%%%%%%%%%%%%%%%%%%%%%
%%%%%%%%%%%%%%%%%%%%%%%%%%%%%%%%

\begin{thm}[{\cite[Theorem 3.5]{Ai2013}}]\label{strongly-connected}
If $\Lambda$ is a tilting-discrete algebra,
then $\Lambda$ is a strongly tilting connected algebra,
that is,
for any tilting complexes $T$ and $U$,
the complex $T$ can be obtained from $U$
by either iterated irreducible left mutation
or iterated irreducible right mutation.
\end{thm}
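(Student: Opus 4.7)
The plan is to deduce the theorem directly from Theorem~\ref{mainthm1} by verifying its three hypotheses (i)--(iii) for the cyclic-defect-group block $B$; this verification is precisely the content of Proposition~\ref{cyclic-block}, which I would invoke (and outline below). Once (i)--(iii) are in hand for the pair $(B,\tilde{B})$, both conclusions of the theorem are formal consequences of Theorem~\ref{mainthm1}: assertion (1) is exactly the first conclusion of Theorem~\ref{mainthm1}, and assertion (2), that the induction functor $(-)\Ind$ induces a poset isomorphism $\tilt B \cong \tilt \tilde{B}$, is exactly the second conclusion. The entire argument is therefore reduction plus the verification of the three numbered hypotheses.

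Conditions (ii) and (iii) are the easy ones in the cyclic-defect-group setting. By Proposition~\ref{cyclic-defect}, $B$ is either semisimple or a Brauer tree algebra, and hence in particular representation-finite. Representation-finite symmetric algebras are tilting-discrete by Aihara's result cited in the introduction, which yields (ii). For (iii), any algebra $A$ derived equivalent to $B$ is again a Brauer tree algebra with the same number of edges and the same multiplicity (a classical fact recalled in the introduction via the structure results on Brauer tree algebras), so $A$ is again representation-finite. A representation-finite algebra has only finitely many indecomposable modules, and hence only finitely many two-term tilting complexes; this gives (iii).

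The main obstacle is condition (i): every indecomposable $B$-module must be isomorphic, as a $B$-module, to its $g$-conjugate for every $g \in I_{\tilde{G}}(B)$. The argument uses the combinatorial classification of indecomposable $B$-modules (the non-projective ones correspond to walks on the Brauer tree, together with the indecomposable projectives) combined with the fact that $I_{\tilde{G}}(B)/G$ is a $p$-group, since $|\tilde{G}:G|$ is a $p$-power. The induced action of this $p$-group on the Brauer tree and its vertex-multiplicity data is shown, in Proposition~\ref{cyclic-block}, to fix every isomorphism class of indecomposable $B$-module, thereby establishing (i). With (i), (ii) and (iii) all in place, Theorem~\ref{mainthm1} applies to the pair $(B,\tilde{B})$ and yields both (1) and (2) of the stated theorem.
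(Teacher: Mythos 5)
Your proposal does not prove the statement at hand. Theorem \ref{strongly-connected} is a general assertion about an arbitrary tilting-discrete (finite-dimensional) algebra $\Lambda$: it claims that any two tilting complexes $T$ and $U$ over $\Lambda$ are connected by iterated irreducible left or right mutation. It is Aihara's result (\cite[Theorem 3.5]{Ai2013}), quoted in the paper as background, and it makes no reference to groups, blocks, defect groups, or the induction functor. What you have written is instead a proof of Theorem \ref{mainthm} (equivalently, of the second theorem stated in the introduction): you verify hypotheses (i)--(iii) of Theorem \ref{mainthm1} for a block $B$ with cyclic defect group via Proposition \ref{cyclic-block} and then conclude tilting-discreteness of $\tilde{B}$ and the poset isomorphism $\tilt B \cong \tilt \tilde{B}$. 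None of this addresses the actual content of Theorem \ref{strongly-connected}, namely the construction, for a general tilting-discrete $\Lambda$, of a finite chain of irreducible mutations joining two given tilting complexes; that argument has to exploit the definition of tilting-discreteness (finiteness of the intervals $\{V \in \tilt\Lambda \mid P \geq V \geq P[\ell]\}$) together with Theorem \ref{Theorem 2.35}, which identifies covering relations in the partial order with irreducible mutations.

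There is also a circularity problem with the route you chose, independent of the mismatch of statements: the proof of Theorem \ref{mainthm1} in the paper (specifically the surjectivity step in Theorem \ref{poset-iso}) invokes Theorem \ref{strongly-connected} to conclude that the tilting-discrete algebra $\tilde{B}$ is strongly tilting connected, so that every tilting complex over $\tilde{B}$ is an iterated mutation of $\tilde{B}$ itself. Hence deducing Theorem \ref{strongly-connected} from Theorem \ref{mainthm1} is not logically available, and in any case could at best say something about the particular blocks $B$ and $\tilde{B}$, not about an arbitrary tilting-discrete algebra $\Lambda$ as the statement requires.
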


%%%%%%%%%%%%%%%%%%%%%%%%%%%%%%%%
%%%%%%%%%%%%%%%%%%%%%%%%%%%%%%%%
%%%%%%%%%%%%%%%%%%%%%%%%%%%%%%%%

There is an equivalent condition on the tilting-discreteness,
which plays an important role later.
\begin{thm}[{\cite[Theorem 1.2]{AiM2017}\label{AiM2017}}]
For a finite-dimensional self-injective algebra $\Lambda$,
the following are equivalent.
\begin{enumerate}[(1)]
\item $\Lambda$ is a tilting-discrete algebra.
\item $2$-${\rm tilt}_P\,\Lambda:=\{ T\in \tilt \Lambda\:|\:
P \geq T \geq P[1] \}$
 is a finite set for any tilting complex
$P$ which is given by iterated irreducible left tilting mutation from $\Lambda$.
\end{enumerate}
\end{thm}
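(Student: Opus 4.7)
The direction $(1) \Rightarrow (2)$ is immediate from Definition \ref{def-tilting-discrete}: specialize $\ell = 1$ and restrict the quantification over $P$ to iterated irreducible left mutations of $\Lambda$.

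For $(2) \Rightarrow (1)$, let $R$ denote the set of tilting complexes obtained from $\Lambda$ by iterated irreducible left mutation (including $\Lambda$ itself). Under hypothesis (2), I would first observe that for each $P \in R$ the (finite, by (2)) interval $\{T : P \geq T \geq P[1]\}$ is entirely contained in $R$: by Theorem \ref{Theorem 2.35} the silting mutation quiver coincides with the Hasse quiver, and inside a finite Hasse subposet with a maximum, every element is reached from the maximum by successive downward Hasse edges, each of which is an irreducible left mutation. Thus $R$ is ``two-term closed'' in $\tilt \Lambda$.

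The heart of the argument is an induction on $\ell$ proving the strengthened claim that for every $P \in R$ the interval $\{T \in \tilt \Lambda : P \geq T \geq P[\ell]\}$ is finite. The base case $\ell = 1$ is hypothesis (2). For the inductive step, I would rely on the reachability fact that for $P > T$ there exists an irreducible left mutation $P'$ of $P$ with $P' \geq T$. Since $P$ has only finitely many such irreducible left mutations $P'_1,\dots,P'_r$ (at most $|\Lambda|$ many), all lying in $R$, the set $\{T : P \geq T \geq P[\ell+1]\} \setminus \{P\}$ is a finite union of subsets of the form $\{T : P'_i \geq T \geq P[\ell+1]\}$. Each such subset is then shown to be finite after a bookkeeping step that uses $P'_i \geq P[1]$, hence $P'_i[\ell] \geq P[\ell+1]$, to place the relevant $T$ inside an interval to which the inductive hypothesis (applied at $P'_i \in R$) supplies finiteness.

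To finish (1), one extends finiteness from intervals anchored at elements of $R$ to intervals anchored at an arbitrary tilting complex $P$ via a shift-and-mutate argument: any tilting $P$ is connected to a shift of $\Lambda$ through a chain of two-term intervals, each of which is finite by the previous step, and finiteness of $[P,P[\ell]]$ then transfers along the chain using the shift invariance of the silting poset. The principal obstacle is the reachability claim underpinning the inductive step — namely, that $P > T$ forces the existence of an irreducible left mutation of $P$ still dominating $T$. This is a genuinely silting-theoretic assertion, not a formal order-theoretic one: it requires selecting a direct summand $X$ of $P$ whose left $({\rm add}\,(P/X))$-approximation triangle produces a mutation $\mu_X^-(P)$ that still satisfies $\mu_X^-(P) \geq T$, and it constitutes the core technical content distinguishing Theorem \ref{AiM2017} from a purely combinatorial statement about posets.
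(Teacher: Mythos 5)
You are attempting to reprove a result that the paper itself does not prove: Theorem \ref{AiM2017} is quoted verbatim from \cite[Theorem 1.2]{AiM2017}, so there is no internal proof to match; the question is only whether your sketch stands on its own, and it does not. Your $(1)\Rightarrow(2)$ direction and the observation that the reachable set $R$ is ``two-term closed'' (via convexity of intervals and Theorem \ref{Theorem 2.35}) are fine. But the heart of $(2)\Rightarrow(1)$ has two genuine gaps. First, you yourself flag the reachability claim ($P>T$ implies some irreducible left mutation $P'$ of $P$ satisfies $P'\geq T$) as ``the core technical content'' and do not prove it; a proposal that leaves exactly the non-formal, silting-theoretic step unproved has not proved the theorem.

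Second, and more seriously, even granting reachability your induction on $\ell$ does not close. From $P'_i\geq P[1]$ you deduce $P'_i[\ell]\geq P[\ell+1]$, but this gives the containment $\{T : P'_i\geq T\geq P'_i[\ell]\}\subseteq\{T: P'_i\geq T\geq P[\ell+1]\}$ --- the \emph{opposite} of what you need. An element $T$ with $P'_i\geq T\geq P[\ell+1]$ (for instance $T=P[\ell+1]$ itself) need in general not satisfy $T\geq P'_i[\ell]$, since that would require $P[1]\geq P'_i$, contrary to $P'_i\geq P[1]$. The only containment available is $T\geq P[\ell+1]\geq P'_i[\ell+1]$, which lands you in an interval of the \emph{same} length $\ell+1$ anchored at $P'_i$, so the induction makes no progress and could recurse forever through mutations without decreasing $\ell$. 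What the actual proofs in \cite{Ai2013} and \cite{AiM2017} supply at this point is a strictly stronger interpolation statement: if $P\geq T\geq P[\ell]$ and $2\textnormal{-}{\rm tilt}_P\,\Lambda$ is finite, then there exists $Q\in 2\textnormal{-}{\rm tilt}_P\,\Lambda$ with $Q\geq T\geq Q[\ell-1]$, i.e.\ one must control the \emph{lower} bound simultaneously with the upper one (this uses Bongartz-type completion of two-term presilting objects, not just the existence of one dominating mutation). Your final ``shift-and-mutate'' extension to arbitrary tilting $P$ is also only a sketch, but it is patchable once every tilting complex is known to be reachable; the unproved reachability claim and the wrong-direction bookkeeping are the real obstructions.
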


%%%%%%%%%%%%%%%%%%%%%%%%%%%%%%%%
%%%%%%%%%%%%%%%%%%%%%%%%%%%%%%%%
%%%%%%%%%%%%%%%%%%%%%%%%%%%%%%%%
The following lemma is essential for the proof 
of Theorem \ref{tilting-discrete}.

\begin{lemma}\label{graphlemma}
Let $\Lambda$ be a finite-dimensional symmetric algebra
and $P$ a tilting complex over $\Lambda$.
Then the underlying graph of the Hasse quiver of the partially ordered set
$2$-${\rm tilt}_P\,\Lambda$ is a
$|\Lambda|$-regular graph.
Moreover, for any algebra $\Gamma$ derived equivalent to $\Lambda$,
if the number of two-term tilting complexes over $\Gamma$ is finite,
then $2$-${\rm tilt}_P\,\Lambda$ is a finite $|\Lambda|$-regular graph.
\end{lemma}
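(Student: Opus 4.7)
The plan is to reduce both assertions to the case $P=\Lambda$ through the derived equivalence afforded by $P$, and then to invoke the regularity of the exchange graph of two-term tilting complexes. Set $\Gamma:=\End_{K^b({\rm proj}\,\Lambda)}(P)$. By Rickard's theorem, $P$ induces a triangle equivalence $F\colon K^b({\rm proj}\,\Lambda)\xrightarrow{\sim} K^b({\rm proj}\,\Gamma)$ with $F(P)\cong \Gamma$ and $F(P[1])\cong \Gamma[1]$. As a triangle equivalence, $F$ preserves the silting order of Definition \ref{parial-order} and irreducible mutations, so it restricts to a poset isomorphism $2\text{-}{\rm tilt}_P\,\Lambda\xrightarrow{\sim} 2\text{-}{\rm tilt}\,\Gamma$ identifying the underlying graphs of the two Hasse quivers. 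Since derived equivalent algebras share the number of simple modules, $|\Gamma|=|\Lambda|$, and we are reduced to establishing both claims over $\Gamma$ with $P=\Gamma$.

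Let $T=T_1\oplus\cdots\oplus T_n\in 2\text{-}{\rm tilt}\,\Gamma$ be basic, decomposed into indecomposable summands. Because every basic tilting complex over $\Gamma$ has exactly $|\Gamma|$ indecomposable summands, $n=|\Gamma|$. By Theorem \ref{Theorem 2.35} the Hasse neighbours of $T$ in $\silt\,\Gamma$ are exactly the irreducible mutations $\mu^{\pm}_{T_i}(T)$, which are again tilting complexes by Proposition \ref{silting-tilting}. The key point is that for each $i$ exactly one of $\mu^-_{T_i}(T)$ and $\mu^+_{T_i}(T)$ lies in the interval $[\Gamma[1],\Gamma]$; equivalently, the almost-complete two-term tilting complex obtained by deleting the summand $T_i$ admits, up to isomorphism, exactly two completions to a basic two-term tilting complex, namely $T$ itself and one of its two mutations. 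This is the standard mutation theorem for two-term silting complexes of Aihara--Iyama, which may also be read off from the mutation of support $\tau$-tilting modules of Adachi--Iyama--Reiten via the bijection with $2\text{-}{\rm tilt}\,\Gamma$. Summing over $i$ produces exactly $n=|\Gamma|=|\Lambda|$ Hasse neighbours of $T$, proving $|\Lambda|$-regularity.

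For the finiteness statement, the hypothesis applied to $\Gamma=\End(P)$, which is derived equivalent to $\Lambda$, yields $|2\text{-}{\rm tilt}\,\Gamma|<\infty$; the isomorphism of the reduction step then gives $|2\text{-}{\rm tilt}_P\,\Lambda|<\infty$, which together with the previous paragraph produces the claimed finite $|\Lambda|$-regular graph. The main obstacle is the two-term exchange property invoked in the regularity step: one must verify, summand by summand, that exactly one direction of irreducible mutation remains in the two-term window $[\Gamma[1],\Gamma]$, whereas everything else is a formal transport along the derived equivalence induced by $P$.
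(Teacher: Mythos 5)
Your proof is correct and follows essentially the same route as the paper: transport via the derived equivalence $F$ induced by $P$ to $\Gamma=\End_{K^b({\rm proj}\,\Lambda)}(P)$, identify $2\text{-}{\rm tilt}_P\,\Lambda$ with the two-term tilting complexes over $\Gamma$, and then invoke the $|\Gamma|$-regularity of that exchange graph. The paper makes the last step by explicitly passing to the support $\tau$-tilting quiver of $\Gamma$ via \cite[Corollary 3.9]{AIR2013} and quoting the regularity of the mutation quiver there, whereas you phrase it as the two-term exchange (Bongartz-type) property for silting complexes and note the $\tau$-tilting route as an equivalent reading; these are the same fact, and the finiteness conclusion is handled identically in both.
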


\begin{proof}
First, we remark that, for any symmetric finite-dimensional algebra $\Lambda$
and for any tilting complex $T$ over $\Lambda$,
if it holds that $\Lambda \geq T \geq \Lambda[1]$, then $T$ is a two-term tilting
complex by the definition of the partial order on $\tilt \:\Lambda$
(we recall that a complex $X=(X^i)$ is said to be a two-term complex
if $X^i=0$ if $i\neq 0, -1$).
Let $P$ be a tilting complex over $\Lambda$.
Since finite-dimensional symmetric algebras are closed under the derived equivalence,
the endomorphism algebra $\End_{K^b({\rm proj}\:\Lambda)}(P)$
is also a finite-dimensional symmetric algebra.
Now, we consider the derived equivalence 
$F: K^{-}({\rm proj}\:\Lambda) \rightarrow 
K^{-}({\rm proj}\:\End_{K^b({\rm proj}\:\Lambda)}(P))$
induced by the tilting complex $P$ over $\Lambda$.
The functor induces an isomorphism between the following two partially ordered sets;
\begin{itemize}
\item
$2\textnormal{-}{\rm tilt}_P\,\Lambda = \{ T\in \tilt \Lambda\:|\:
P \geq T \geq P[1] \}$
%\rightarrow
\item
$
\{ T'\in \tilt \:\End_{K^b({\rm proj}\:\Lambda)}(P)\:|\:
\End_{K^b({\rm proj}\:\Lambda)}(P) \geq T' \geq \End_{K^b({\rm proj}\:\Lambda)}(P)[1] \}.
$
\end{itemize}
The latter is the set of all two-term tilting complexes over $\End_{K^b({\rm proj}\:\Lambda)}(P)$.
Hence it is isomorphic to the support $\tau$-tilting quiver
of $\End_{K^b({\rm proj}\:\Lambda)}(P)$
by \cite[Corollary 3.9]{AIR2013},
so its underlying graph
is a $|\Lambda|$-regular graph
since the derived equivalence between
$\Lambda$ and $\End_{K^b({\rm proj}\:\Lambda)}(P)$
means $|\End_{K^b({\rm proj}\:\Lambda)}(P)|=|\Lambda|$
(we recall that two support $\tau$-tilting modules
are connected by an arrow in the support $\tau$-tilting
quiver if and only if they are mutations of each other
and that for each support $\tau$-tilting module $M$ over $\Lambda$
there are $|\Lambda|$ sorts of mutations of $M$).
Therefore the underlying graph of the Hasse quiver of the set
$2\textnormal{-}{\rm tilt}_P\,\Lambda$
is a $|\Lambda|$-regular graph too.

The remaining argument is clear because
$\End_{K^b({\rm proj}\:\Lambda)}(P)$ is derived equivalent to $\Lambda$
for any tilting complex $P$ over $\Lambda$.
\end{proof}

%%%%%%%%%%%%%%%%%%%%%%%%%%%%%%%%
%%%%%%%%%%%%%%%%%%%%%%%%%%%%%%%%
%%%%%%%%%%%%%%%%%%%%%%%%%%%%%%%%

\begin{cor}\label{graphlemma-for-Brauer-tree-algebra}
For a Brauer tree algebra $\Lambda$ and any tilting complex $P$ over $\Lambda$,
the set $2$-${\rm tilt}_P\,\Lambda$ is a finite $|\Lambda|$-regular graph.
\end{cor}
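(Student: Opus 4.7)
The plan is to reduce the corollary directly to Lemma \ref{graphlemma}. Since a Brauer tree algebra $\Lambda$ is a finite-dimensional symmetric algebra (this is classical: Brauer tree algebras arise as basic algebras Morita equivalent to blocks with cyclic defect, and such blocks are symmetric as group algebras are symmetric), we are in the setting of Lemma \ref{graphlemma}. Hence it suffices to verify the additional hypothesis of the second assertion of that lemma, namely that \emph{every} finite-dimensional algebra $\Gamma$ derived equivalent to $\Lambda$ has only finitely many two-term tilting complexes.

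To establish this, I would invoke Rickard's classification of the derived equivalence class of a Brauer tree algebra: any basic algebra $\Gamma$ derived equivalent to $\Lambda$ is again a Brauer tree algebra, with a Brauer tree having the same number of edges and the same exceptional multiplicity as that of $\Lambda$. In particular $\Gamma$ is of finite representation type (either by Proposition \ref{cyclic-defect} applied to a block realizing $\Gamma$, or by the classical fact that Brauer tree algebras are representation-finite). Therefore $\Gamma$ has only finitely many isomorphism classes of indecomposable modules, so only finitely many basic support $\tau$-tilting modules, and the Adachi--Iyama--Reiten bijection between support $\tau$-tilting $\Gamma$-modules and basic two-term silting complexes over $\Gamma$ (together with Proposition \ref{silting-tilting}, since $\Gamma$ is symmetric) shows that the set of two-term tilting complexes over $\Gamma$ is finite.

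With this hypothesis verified, Lemma \ref{graphlemma} immediately yields that for any tilting complex $P$ over $\Lambda$ the set $2$-${\rm tilt}_P\,\Lambda$ is a finite $|\Lambda|$-regular graph, which is exactly the statement of the corollary. The only nontrivial ingredient is the closure of the Brauer-tree property under derived equivalence; this is the main point one needs to cite, and modulo it the proof is a one-line deduction from Lemma \ref{graphlemma}.
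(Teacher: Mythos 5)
Your proof is correct and follows essentially the same route as the paper: both reduce the corollary to Lemma \ref{graphlemma} after establishing that every algebra derived equivalent to a Brauer tree algebra is again a Brauer tree algebra (Rickard's theorem), and that such algebras have only finitely many two-term tilting complexes. The one small difference is how that finiteness is justified: the paper cites the classification results \cite[Theorem 1.1]{AMN2020} or \cite[Theorem 5.2]{Ai2013} directly, while you derive it from the more elementary observation that Brauer tree algebras are representation-finite, hence $\tau$-tilting finite, combined with the Adachi--Iyama--Reiten bijection and Proposition \ref{silting-tilting}. Your version is slightly more self-contained and in fact shows that the finiteness hypothesis of Lemma \ref{graphlemma} holds for \emph{any} representation-finite symmetric algebra, but for the statement at hand the two arguments are interchangeable.
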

\begin{proof}
If $\Lambda$ is a Brauer tree algebra,
then, for any tilting complex $P$ over $\Lambda$,
the endomorphism algebra $\End_{K^b({\rm proj}\:\Lambda)}(P)$ 
is a Brauer tree algebra again.
Hence the number of elements in $2\textnormal{-}{\rm tilt}_P\,\Lambda$
which is isomorphic to the set of all two-term tilting complex over
$\End_{K^b({\rm proj}\:\Lambda)}(P)$
is finite
by \cite[Theorem 1.1]{AMN2020} or 
\cite[Theorem 5.2]{Ai2013}.
Hence we can apply Lemma \ref{graphlemma} and complete the proof.
\end{proof}

%%%%%%%%%%%%%%%%%%%%%%%%%%%%%%%%
%%%%%%%%%%%%%%%%%%%%%%%%%%%%%%%%
%%%%%%%%%%%%%%%%%%%%%%%%%%%%%%%%
%%%%%%%%%%%%%%%%%%%%%%%%%%%%%%%%
%%%%%%%%%%%%%%%%%%%%%%%%%%%%%%%%
%%%%%%%%%%%%%%%%%%%%%%%%%%%%%%%%
%%%%%%%%%%%%%%%%%%%%%%%%%%%%%%%%
%%%%%%%%%%%%%%%%%%%%%%%%%%%%%%%%
%%%%%%%%%%%%%%%%%%%%%%%%%%%%%%%%

\section{Main Theorem and its Proof}\label{mainsection}
\subsection{Notation and Assumption}\label{notation}
In this section, we use the following notation.
Let $k$ be an algebraically closed field of characteristic $p>0$.
We denote by $G$ a finite group,
and by $\tilde{G}$ a finite group having $G$ as a normal subgroup
and assume that the index of $G$ in $\tilde{G}$ is $p^n$ for some positive integer $n$.
We denote by $B$ a block of $kG$.
Then by Proposition \ref{cover}, 
there exists a unique block of $k\tilde{G}$ covering $B$.
We denote the unique block of $k\tilde{G}$ by $\tilde{B}$.
Also 
we denote by $I_{\tilde{G}}(B):=\{\tilde{g}\in \tilde{G}\:|\: \tilde{g}B\tilde{g}^{-1}=B \}$
the inertial group of the block $B$ in $\tilde{G}$.
Moreover we assume the following condition is satisfied:
\begin{enumerate}
\item[(Inv)]
Any $B$-module is $I_{\tilde{G}}(B)$-invariant,
that is, for any $B$-module $U$ and any $\tilde{g} \in I_{\tilde{G}}(B)$,
it holds that $U\tilde{g} \cong U$ as $B$-modules.
\end{enumerate}

%%%%%%%%%%%%%%%%%%%%%%%%%%%%%%%%
%%%%%%%%%%%%%%%%%%%%%%%%%%%%%%%%
%%%%%%%%%%%%%%%%%%%%%%%%%%%%%%%%
\begin{lemma}\label{Inv-lemma}
The condition \textnormal{(Inv)} on the invariance of modules
implies that of complexes of $\hmtB$.
\end{lemma}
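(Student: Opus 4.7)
The plan is to show $X\tilde g\cong X$ in $\hmtB$ for every $X\in \hmtB$ and every $\tilde g\in I_{\tilde G}(B)$, proceeding by induction on the number $\ell$ of nonzero terms of $X$. The base case $\ell\le 1$ is immediate: $X$ is a shift $M[n]$ of a projective $B$-module $M$, so $X\tilde g = (M\tilde g)[n]\cong M[n]=X$ by \textnormal{(Inv)} applied to $M$.

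For the inductive step, assume $X$ has nonzero terms in degrees $a,\dots,b$ with $a<b$, and put $Y:=\sigma_{>a}X$, the stupid truncation above degree $a$. There is then a short exact sequence of complexes
\[
0\longrightarrow Y\longrightarrow X\longrightarrow X^a[-a]\longrightarrow 0,
\]
which produces a distinguished triangle $Y\to X\to X^a[-a]\to Y[1]$ in $\hmtB$. By the inductive hypothesis $Y\tilde g\cong Y$, and by the base case $X^a[-a]\tilde g\cong X^a[-a]$; since $(-)\tilde g$ is a triangulated autoequivalence of $\hmtB$, twisting yields a triangle with the same outer terms up to isomorphism.

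Rather than attempt a purely formal triangulated argument (which would be insufficient, since a triangle is not determined by its outer terms), I would construct a chain isomorphism $X\to X\tilde g$ directly. Using \textnormal{(Inv)}, choose $B$-linear isomorphisms $\psi^i\colon X^i\to X^i\tilde g$ in each degree. Their failure to commute with the differentials, namely
\[
\beta^i:=\psi^{i+1}d^i-(d^i\tilde g)\psi^i\colon X^i\to X^{i+1}\tilde g,
\]
is a cocycle whose homotopy class defines an element of $\Hom_{\hmtB}(X,X\tilde g[1])$. The freedom to modify each $\psi^i$ by $\psi^i+h^i$ for any $h^i\colon X^i\to X^i\tilde g$ gives us room to try to kill this class, and the inductive data---a chain isomorphism on $Y\tilde g\to Y$ and an isomorphism $X^a\tilde g\to X^a$---should enable a coherent choice of $(\psi^i)$ whose $\beta$ is a coboundary.

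I expect the hardest step to be exactly this obstruction-vanishing: arranging that the inductive chain isomorphism on $Y$ glues, via the connecting morphism of the truncation sequence, with the chosen isomorphism on $X^a$ so that $[\beta]=0$ in $\Hom_{\hmtB}(X,X\tilde g[1])$. Once a chain map $(\psi^i)\colon X\to X\tilde g$ is produced whose components are all $B$-linear isomorphisms, it is automatically an isomorphism in $\hmtB$ (since both complexes are bounded above and below by projectives), which completes the induction.
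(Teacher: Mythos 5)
Your setup—induction on the length of the complex, stupid truncation to peel off one term, and the recognition that a purely formal triangulated argument cannot work because a triangle is not determined by its outer terms—matches the skeleton of the paper's argument. But the core step is not carried out, and the obstruction-theoretic framing you offer for it does not actually capture the problem.

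The difficulty: with $\psi=(\psi^i)$ a degree-$0$ graded map, the failure $\beta=\psi d-(d\tilde g)\psi$ is exactly $-D(\psi)$, where $D$ is the differential of the Hom-complex $\Hom^{\bullet}(X,X\tilde g)$. Thus $\beta$ is \emph{always} a coboundary, so $[\beta]=0$ in $\Hom_{\hmtB}(X,X\tilde g[1])$ holds automatically and carries no information. Likewise, asking that $\beta$ become $D(h)$ for some graded $h$ is equivalent to asking that $\psi-h$ be a chain map; the zero map already witnesses this, so "killing the class" is not the right formulation. What you actually need is to produce a chain map $X\to X\tilde g$ whose components are all isomorphisms, and neither $[\beta]=0$ nor $\beta=D(h)$ forces the modified map to remain componentwise invertible. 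You correctly flag this extension step as the hardest part but then only say the inductive data "should enable a coherent choice"—that is precisely the content of the lemma, and it is left undone.

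The paper resolves this by heavy use of the fact that $B$ is self-injective, so every term of $X\in\hmtB$ is simultaneously projective and injective. Concretely: the isomorphism $\operatorname{Im}d_0\cong\operatorname{Im}d_0'$ (from (Inv)) is first extended to an isomorphism of injective envelopes inside $X_1$ and $X_1\tilde g$, then completed to an isomorphism $\alpha_1\colon X_1\to X_1\tilde g$ of the complementary summands by Krull--Schmidt; the same is done with projective covers to get $\alpha_0$ compatible with the bottom square; finally a comparison of injective resolutions (exploiting $Y\cong Y\tilde g$ from the inductive hypothesis) produces the higher-degree components. Your proposal contains none of this machinery, and without some version of it the induction does not close.
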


\begin{proof}
Assume that the condition (Inv) is satisfied
and that any complex of $\hmtB$ of length less than or equal to $n$ is $I_{\tilde{G}}(B)$-invariant.
Take an arbitrary complex 
$(X_0 \xrightarrow{d_0} X_1 \xrightarrow{d_1} X_2 \xrightarrow{d_2}\cdots 
\xrightarrow{d_{n-1}}X_n)$
of projective $B$-modules,
and let $Y$ be the truncated complex 
$(X_1 \xrightarrow{d_1} X_2\xrightarrow{d_2}\cdots \xrightarrow{d_{n-1}}X_n)$.
Then, by the assumption we have that
$$Y\tilde{g} =
(X_1\tilde{g} \xrightarrow{d'_1} X_2\tilde{g} \xrightarrow{d'_2}
\cdots \xrightarrow{d'_{n-1}}X_n\tilde{g})
\cong Y$$
for any $\tilde{g}\in I_{\tilde{G}}(B)$,
where $d'_i: X_i\tilde{g}\rightarrow X_{i+1}\tilde{g}$
is the homomorphism which maps $x_i\tilde{g}$ to $d_i(x_i)\tilde{g}$
for each $i$.
We need to show that 
$$X\tilde{g} = (X_0\tilde{g} \xrightarrow{d'_0} X_1\tilde{g} 
\xrightarrow{d'_1} X_2\tilde{g} \xrightarrow{d'_2} 
\cdots \xrightarrow{d'_{n-1}}X_n\tilde{g})
 \cong X.$$
We easily see that ${\rm Im}\,d'_i = ({\rm Im}\,d_i)\tilde{g}$ for each $i$.
Hence
we have that ${\rm Im}\,d_0 \cong {\rm Im}\,d'_0$
by the condition (Inv).
By the injectivity of $I({\rm Im}\,d_0)$,
we have the extension $\alpha_0': I({\rm Im}\,d_0) \rightarrow I({\rm Im}\,d'_0)$
of the isomorphism ${\rm Im}\,d_0\cong {\rm Im}\,d'_0$, where $I({\rm Im}\,d_0)$ and $I({\rm Im}\,d'_0)$
mean  injective envelopes of $I({\rm Im}\,d_0)$ and $I({\rm Im}\,d'_0)$,
respectively, that is we have the following commutative diagram:
\[
\xymatrix{
 {\rm Im}\,d_0 \ar@{>->}[r]^{i_0}\ar[d]^{\cong} & I({\rm Im}\,d_0) \ar@{.>}[ddl]^{\alpha'_1} \\
 {\rm Im}\,d'_0 \ar@{>->}[d] & \\
 I({\rm Im}\,d'_0)  
.}
\]
Since the homomorphism $\alpha_1'\circ i_0$ is a monomorphism
and $i_0:  {\rm Im}\,d_0 \rightarrow I({\rm Im}\,d_0)$ is an essential monomorphism,
we have the homomorphism $\alpha_1'$ is a monomorphism.
Hence we have that $\alpha_1'$ is an isomorphism because
$\dim  I({\rm Im}\,d_0)  = \dim I({\rm Im}\,d'_0)$.
Since $I({\rm Im}\,d_0) $ is a direct summand of $X_1$
and $I({\rm Im}\,d'_0)$ is that of $X_1\tilde{g}$, 
we have an isomorphism 
$\alpha''_1 :X_1/I({\rm Im}\,d_0) \rightarrow X_1\tilde{g}/I({\rm Im}\,d'_0)$
by Krull-Schmidt theorem.
Using two direct decompositions
$X_1= I({\rm Im}\,d_0) \oplus X_1/I({\rm Im}\,d_0)$
and 
$X_1\tilde{g}=I({\rm Im}\,d'_0) \oplus X_1\tilde{g}/I({\rm Im}\,d'_0)$,
we define the homomorphism
$$ \alpha_1: X_1= I({\rm Im}\,d_0) \oplus X_1/I({\rm Im}\,d_0)
\xrightarrow{
\begin{bmatrix}
\alpha_1' &0 \\ 0& \alpha''_1
\end{bmatrix}}
 I({\rm Im}\,d'_0) \oplus X_1\tilde{g}/I({\rm Im}\,d'_0)
 =X_1\tilde{g}.
$$
Obviously the homomorphism $\alpha_1$ is an isomorphism.

Next, let $P({\rm Im}\,d_0)$ and $P({\rm Im}\,d'_0)$ be
projective covers of ${\rm Im}\,d_0$ and ${\rm Im}\,d'_0$, respectively.
By the projectivity of $P({\rm Im}\,d_0)$,
we have a homomorphism $\alpha_0'$ which makes the
following diagram commutative:
\[
\xymatrix{
& P({\rm Im}\,d_0) \ar@{.>}[ddl]_{\alpha'_0}\ar@{->>}[d] \\
& {\rm Im}\,d_0 \ar[d]_{\cong}\\
P({\rm Im}\,d'_0)\ar@{->>}[r]& {\rm Im}\,d'_0 
.}
\]
The dual argument to the previous one gives the homomorphism
$\alpha'_0$  is an isomorphism.
Hence we have an isomorphism 
$$ 
\alpha_0: X_0= P({\rm Im}\,d_0) \oplus X_0/P({\rm Im}\,d_0)
\xrightarrow{
\begin{bmatrix}
\alpha_0' &0 \\ 0& \alpha''_0
\end{bmatrix}}
P({\rm Im}\,d'_0) \oplus X_0\tilde{g}/P({\rm Im}\,d'_0)
 =X_0\tilde{g},
$$
where $\alpha_0'': X_0/P({\rm Im}\,d_0) \rightarrow 
X_0\tilde{g}/P({\rm Im}\,d'_0)$
is an isomorphism.
By the construction of $\alpha_0$ and $\alpha_1$,
we have the following commutative diagram:
\[
\xymatrix{
X_0 \ar[r]_{d_0}\ar[d]_{\alpha_0}^{\cong} & X_1 \ar[d]_{\alpha_1}^{\cong}\\
X_0\tilde{g} \ar[r]_{d_0'} & X_1\tilde{g}
}
\]
Moreover we can take an injective resolution $I(Y)$ 
starting with ${\rm \alpha_1}: X_1\rightarrow X_1\tilde{g}$
since $X_1\tilde{g}$ is an injective envelope of $X_1$:
\[
\xymatrix{
Y: &X_1 \ar[d]^{\alpha_1}\ar[r]^{d_1} & X_2 \ar[r]^{d_2}\ar[d]^{\beta_2}
&\cdots \ar[r]^{d_{n-1}} &X_n\ar[d]^{\beta_n}  \\
I(Y): &X_1\tilde{g}\ar[r] & I(Y)_2 \ar[r] &\cdots \ar[r]& I(Y)_n.
 }
\]
Since $\beta_2 \circ d_1 \circ \alpha_1^{-1}\circ d'_0
=0$, we have a complex
$$ X_0\tilde{g} \xrightarrow{d_0'} X_1\tilde{g} \rightarrow I(Y)_2\rightarrow
\cdots \rightarrow I(Y)_n.$$
Also, by the assumption that $Y \cong Y\tilde{g}$,
we have that $I(Y) \cong Y\tilde{g} $,
that is we have the following commutative diagram
with each $\gamma_i$ invertible:
\[
\xymatrix{
I(Y): &X_1\tilde{g}\ar@{=}[d]\ar[r] & I(Y)_2 \ar[r]\ar[d]^{\gamma_2} &\cdots \ar[r]& I(Y)_n\ar[d]^{\gamma_n}\\
Y\tilde{g}: &X_1\tilde{g} \ar[r]^{d'_1} & X_2\tilde{g} \ar[r]^{d'_2}
&\cdots \ar[r]^{d'_{n-1}} &X_n\tilde{g}.
 }
\]
Therefore we have the following commutative diagram:
\[
\xymatrix{
X: &X_0 \ar[d]^{\alpha_0}\ar[r]^{d_0}&X_1 \ar[d]^{\alpha_1}\ar[r]^{d_1} & X_2 \ar[r]^{d_2}\ar[d]^{\beta_2}
&\cdots \ar[r]^{d_{n-1}} &X_n\ar[d]^{\beta_n}  \\
&X_0\tilde{g}\ar@{=}[d]\ar[r]^{d_0'} &X_1\tilde{g} \ar@{=}[d]\ar[r] &  I(Y)_2 \ar[r]\ar[d]^{\gamma_2}
&\cdots \ar[r] &I(Y)_n\ar[d]^{\gamma_n}  \\
X\tilde{g}: &X_0\tilde{g}\ar[r]^{d_0'} &X_1\tilde{g}\ar[r]^{d'_1} & X_2\tilde{g}\ar[r]^{d'_2} &\cdots \ar[r]^{d'_{n-1}}& X_n\tilde{g}.
 }
\]
%%%%%
Since all vertical homomorphisms are invertible,
we get that $X\cong X\tilde{g}$.

\end{proof}

%%%%%%%%%%%%%%%%%%%%%%%%%%%%%%%%
%%%%%%%%%%%%%%%%%%%%%%%%%%%%%%%%
%%%%%%%%%%%%%%%%%%%%%%%%%%%%%%%%
\begin{lemma}\label{approximation}
Let $X$ and $Y$ be complexes of $\hmtB$,
and let $\mathcal{M}$ be a full subcategory of $\hmtB$.
Then the following hold.
\begin{enumerate}[(1)]
\item
For a minimal left $\mathcal{M}$-approximation $f: X \rightarrow M$ of $X$ in $\hmtB$,
the induced map $f\Ind: X\Ind \rightarrow M\Ind$ is a minimal left 
$\mathcal{M}\Ind$-approximation of $X\Ind$ in $\hmttildeB$.
\item
For a minimal right $\mathcal{M}$-approximation $f: M \rightarrow Y$ of $Y$ in $\hmtB$,
the induced map $f\Ind: M\Ind \rightarrow Y\Ind$ is a  minimal right 
$\mathcal{M}\Ind$-approximation of $Y\Ind$ in $\hmttildeB$.
\end{enumerate}
\end{lemma}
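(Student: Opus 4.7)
The plan is to prove (1); (2) follows by a dual argument. The principal tools are Frobenius reciprocity
\[
\Hom_{k\tilde{G}}(U\Ind, V) \cong \Hom_{kG}(U, V|_G),
\]
the Mackey-type decomposition $(V\Ind)|_G \cong \bigoplus_{\tilde{g} \in [\tilde{G}/G]} V\tilde{g}$, the complex-version of (Inv) established in the previous lemma (so $V\tilde{g} \cong V$ in $\hmtB$ whenever $\tilde{g} \in I_{\tilde{G}}(B)$), and the observation that for $\tilde{g} \notin I_{\tilde{G}}(B)$ the conjugate $V\tilde{g}$ lies in the block $B^{\tilde{g}} \neq B$, forcing $\Hom_{\hmtB}(X, V\tilde{g}) = 0$.

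For the approximation property, take an arbitrary morphism $g\colon X\Ind \to N\Ind$ in $\hmttildeB$ with $N \in \mathcal{M}$, and let $\bar{g}\colon X \to (N\Ind)|_G$ be its Frobenius transpose. By the above, only components indexed by $\tilde{g} \in I_{\tilde{G}}(B)/G$ contribute, and via (Inv) each target identifies with $N$. The left $\mathcal{M}$-approximation property of $f$ then factors $\bar g$ componentwise through $f$, yielding $\bar{h}\colon M \to (N\Ind)|_G$ with $\bar{h} \circ f = \bar{g}$, whose Frobenius transpose $h\colon M\Ind \to N\Ind$ satisfies $h \circ f\Ind = g$. Hence $f\Ind$ is a left $\mathcal{M}\Ind$-approximation.

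For the minimality of $f\Ind$, take $\phi \in \End_{\hmttildeB}(M\Ind)$ with $\phi \circ f\Ind = f\Ind$. Since $\phi|_G$ is $kG$-linear, it respects the block decomposition of $(M\Ind)|_G$, so it suffices to show that $\phi|_G$ is an isomorphism on each block-component: once $\phi|_G$ is an isomorphism in $K^b({\rm proj}\,kG)$, it is a quasi-isomorphism, hence so is $\phi$, and by full faithfulness of $\hmttildeB$ in the derived category $\phi$ is then an isomorphism. Focusing on the component lying in block $B$, (Inv) identifies this component of $(M\Ind)|_G$ with $M^n$ for $n = |I_{\tilde{G}}(B)/G|$, and under this identification $f\Ind|_G$ becomes the diagonal map $\mathrm{diag}(f)$, so $\phi|_G$ is represented by a matrix $(\tilde{\phi}_{ij}) \in \mathrm{Mat}_n(R)$ with $R = \End_{\hmtB}(M)$, satisfying $\tilde{\phi}_{ii} \circ f = f$ and $\tilde{\phi}_{ij} \circ f = 0$ for $i \neq j$. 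Left minimality of $f$ forces $\tilde{\phi}_{ii} \in R^{\times}$, while the standard characterization of left minimality (the left ideal $\{\psi \in R : \psi \circ f = 0\}$ is contained in $\rad R$) places $\tilde{\phi}_{ij} \in \rad R$ for $i \neq j$; since $\rad \mathrm{Mat}_n(R) = \mathrm{Mat}_n(\rad R)$, the matrix is invertible. The components lying in the other blocks are handled identically, using that (Inv) for $B$ implies the analogous invariance for each $G$-conjugate $B^{\tilde{g}}$ by transport of structure.

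The main obstacle I anticipate is the bookkeeping in the minimality step: one must choose the (Inv)-identifications compatibly so that, after transport through Frobenius, $f\Ind|_G$ genuinely becomes $\mathrm{diag}(f)$ rather than a diagonal twisted by automorphisms of $M$ in each coset slot. Once this is pinned down, the matrix-ring radical computation and the passage from quasi-isomorphisms of bounded projective complexes to isomorphisms in $\hmttildeB$ close the proof with no further input.
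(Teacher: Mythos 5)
Your proof is correct, and the approximation step is essentially the paper's argument (Frobenius reciprocity, Mackey decomposition, and the invariance condition, with components factored through $f$ one at a time). Where you genuinely diverge is in the minimality step. The paper's treatment of left minimality is very terse: from $h\circ f\Ind = f\Ind$ it passes via ``restriction to $X\otimes_{kG}1$'' to the assertion that the restricted map is an isomorphism and then immediately concludes that $h$ itself is an isomorphism, without justifying that last implication (the projection $\End(M\Ind)\to\End(M)$ onto the degree-$1$ component of the $\tilde G/G$-grading is not a ring homomorphism, and for a general graded element the off-diagonal pieces need not lie in the radical). Your matrix-ring argument supplies exactly the missing ingredient: you extract from $h\circ f\Ind=f\Ind$ \emph{all} the componentwise equations $\tilde\phi_{ii}\circ f=f$ and $\tilde\phi_{ij}\circ f=0$ ($i\neq j$), use left minimality to place the diagonal entries in $R^\times$ and the off-diagonal entries in $\rad R$ (correctly, via the observation that $\psi\circ f=0$ forces $1+r\psi$ to be invertible for all $r$), and invoke $\rad\,\mathrm{Mat}_n(R)=\mathrm{Mat}_n(\rad R)$. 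The compatibility worry you flag yourself is genuine but resolvable: having chosen isomorphisms $u_{\tilde g}\colon X\to X\tilde g$, the uniqueness of minimal left approximations lets you choose $v_{\tilde g}\colon M\to M\tilde g$ so that $v_{\tilde g}\circ f=f\tilde g\circ u_{\tilde g}$, which makes $f\Ind\vert_G$ genuinely the diagonal $\mathrm{diag}(f)$. So your proof is not only correct but in fact more complete than the paper's on the minimality half; the rest is a routine transfer from ``quasi-isomorphism of bounded complexes of projectives'' back to ``isomorphism in the homotopy category,'' which is fine.
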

\begin{proof}
We prove (1), and the other can be proved similarly.

First we show that 
$f\Ind: X\Ind \rightarrow M\Ind$ is left minimal.
Take a morphism $h : M\Ind \rightarrow M\Ind$
satisfying $h\circ f\Ind =f\Ind$.
By restricting $h\circ f\Ind =f\Ind$ to $X\otimes_{kG} 1\cong X$,
we have that $h\circ f = f$, which means that
$h : M\rightarrow M$ is an isomorphism by the left minimality of $f$.
Hence $h : M\Ind \rightarrow M\Ind$ is an isomorphism too.

It remains to show that
$f\Ind: X\Ind \rightarrow M\Ind$ is a left 
$\mathcal{M}\Ind$-approximation of $X\Ind$ in $\hmttildeB$.
We can assume $I_{\tilde{G}}(B)=\tilde{G}$ because
the unique block of $kI_{\tilde{G}}(B)$ covering $B$ is Morita equivalent
to the block $\tilde{B}$ of $k\tilde{G}$ by \cite[Theorem 5.5.12]{NT1989}.
Let $Z$ be a complex of $\mathcal{M}$,
and let $\alpha : X\Ind \rightarrow Z\Ind$
be a morphism in $\hmttildeB$.
We show that there exists a morphism $\beta : M\Ind \rightarrow Z\Ind$
such that $\alpha=\beta\circ f\Ind$,
which means that $f\Ind$ is a left $\mathcal{M}\Ind$-approximation of $X\Ind$
in $\hmttildeB$.
By Frobenius reciprocity and Mackey's formula,
we get isomorphisms
\begin{align*}
\Hom_{\hmttildeB}(X\Ind, Z\Ind)
&\cong
\Hom_{\hmtB}(X, Z\Ind{}_{\downarrow G})\\
&\cong
\bigoplus_{\tilde{g}G\in \tilde{G}/G}\Hom_{\hmtB}(X,  Z\tilde{g})\\
&\cong
\Hom_{\hmtB}(X, Z)^{\oplus p^n},
\end{align*}
here the last isomorphism comes from the assumption that 
any $B$-module is $I_{\tilde{G}}(B)$-invariant.
Hence we can take the morphisms $(\alpha_1, \ldots, \alpha_{p^n}) \in \Hom_{\hmtB}(X, Z)^{\oplus p^n}$
corresponding to $\alpha \in \Hom_{\hmttildeB}(X\Ind, Z\Ind)$.
Now, the morphism $f: X\rightarrow M$ is a left $\mathcal{M}$-approximation of $X$
and $Z$ is an object of $\mathcal{M}$, so 
there exists a morphism $\beta_i$ in $\Hom_{\hmtB}(M, Z)$
such that $\alpha_i=\beta_i\circ f$ for each $1 \leq i \leq p^n$.
Hence we get $\alpha=\beta\circ f^{\Ind}$, where $\beta$ is a 
morphism in $\Hom_{\hmttildeB}(M\Ind, Z\Ind)$ 
corresponding to 
$(\beta_1, \ldots, \beta_{p^n}) \in \Hom_{\hmtB}(M, Z)^{\oplus p^n}$
which is isomorphic to
$\Hom_{\hmttildeB}(M\Ind, Z\Ind)$.
Therefore $f\Ind: X\Ind \rightarrow M\Ind$ is a left $\mathcal{M}\Ind$-approximation
of $X\Ind$ in $\hmttildeB$.
\end{proof}
%%%%%%%%%%%%%%%%%%%%%%%%%%%%%%%%
%%%%%%%%%%%%%%%%%%%%%%%%%%%%%%%%
%%%%%%%%%%%%%%%%%%%%%%%%%%%%%%%%

\begin{prop}\label{commute}
For any tilting complex $T$ over $B$,
the induced complex $T\Ind$ is a tilting complex over $\tilde{B}$
and irreducible tilting mutations commute with the induction functor $(-)\Ind$.
\end{prop}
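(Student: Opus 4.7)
The plan is to verify the two defining conditions of a tilting complex for $T\Ind$ and then to transport the approximation triangle defining an irreducible mutation through the triangulated functor $(-)\Ind$.

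For the $\Hom$-vanishing I would reuse the calculation performed in the proof of Lemma \ref{approximation}: Frobenius reciprocity, Mackey's formula, and the preceding lemma extending condition (Inv) to complexes together yield a natural isomorphism
\[
\Hom_{\hmttildeB}(T\Ind, T\Ind[n]) \cong \Hom_{\hmtB}(T, T[n])^{\oplus p^n},
\]
which vanishes for every $n \neq 0$ because $T$ is a tilting complex over $B$. For the generation condition ${\rm thick}\,(T\Ind) = \hmttildeB$, I will use that $(-)\Ind$ is a triangulated functor which sends projective $B$-modules to projective $\tilde{B}$-modules (since $\tilde{B}$ is the unique block of $k\tilde{G}$ covering $B$). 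Applying this functor to the relation $B \in {\rm thick}\,T$ gives $B\Ind \in {\rm thick}\,(T\Ind)$, so it is enough to check $\tilde{B} \in {\rm add}\,(B\Ind)$. This follows from standard Clifford theory for the $p$-power-index normal subgroup $G \trianglelefteq \tilde{G}$: every simple $\tilde{B}$-module $\tilde{S}$ restricts to a simple $B$-module $S$, and Frobenius reciprocity then places the projective cover of $\tilde{S}$ as a direct summand of $P\Ind$, where $P$ is the projective cover of $S$ and hence a summand of $B$.

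For the compatibility with irreducible left mutation I will write $T = X \oplus M$ with $X$ indecomposable and take the defining triangle
\[
X \xrightarrow{f} M' \rightarrow Y \rightarrow X[1]
\]
in which $f$ is a minimal left $({\rm add}\,M)$-approximation of $X$, so that $\mu_X^{-}(T) = Y \oplus M$. Applying the triangulated functor $(-)\Ind$ produces a triangle
\[
X\Ind \xrightarrow{f\Ind} M'\Ind \rightarrow Y\Ind \rightarrow X\Ind[1]
\]
in $\hmttildeB$; Proposition \ref{Green} together with Remark \ref{induced-complex} shows that $X\Ind$ remains indecomposable, and Lemma \ref{approximation}(1) shows that $f\Ind$ is a minimal left $({\rm add}\,(M\Ind))$-approximation of $X\Ind$. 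Hence the irreducible left mutation of $T\Ind = X\Ind \oplus M\Ind$ at the indecomposable summand $X\Ind$ is precisely $Y\Ind \oplus M\Ind = (\mu_X^{-}(T))\Ind$, and the argument for irreducible right mutation is dual using Lemma \ref{approximation}(2). The step I expect to be the main obstacle is the thick-generation claim, since the Clifford-theoretic input $\tilde{B} \in {\rm add}\,(B\Ind)$ is not isolated as a lemma in the preliminaries and has to be extracted from the covering/restriction structure by hand; once it is secured, the remainder is a formal consequence of Lemma \ref{approximation} and the $\Hom$-calculation already carried out in its proof.
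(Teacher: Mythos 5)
Your proposal is correct, and it takes a genuinely different route from the paper for the first assertion.

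For the claim that $T\Ind$ is a tilting complex, the paper does not verify the two defining conditions directly: it instead cites Marcus's result (\cite[Proposition 2.1]{Ma2005}) to conclude immediately that the $I_{\tilde{G}}(B)$-invariant tilting complex $T$ induces to a tilting complex over the covering block of $kI_{\tilde{G}}(B)$, and then pushes this through the Fong--Reynolds Morita equivalence $-\otimes_{kI_{\tilde{G}}(B)} k\tilde{G}$ between that block and $\tilde{B}$ (\cite[Theorem 5.5.12]{NT1989}). You instead verify the $\Hom$-vanishing by the Frobenius/Mackey/(Inv) computation (which the paper does carry out, but only inside Lemma \ref{approximation} and later in Theorem \ref{tilting-discrete}), and you handle thick generation by noting that $(-)\Ind$ is triangulated and that $\tilde{B}\in{\rm add}\,(B\Ind)$, the latter extracted by hand from the $p$-power-index Clifford theory. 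Both routes are sound; the paper's is shorter because it delegates to Marcus, while yours is more self-contained and makes the whole proposition rest on a single uniform Frobenius--Mackey mechanism. A small caveat on your side: your $\Hom$-vanishing calculation, like the paper's in Lemma \ref{approximation}, tacitly uses the reduction to $I_{\tilde{G}}(B)=\tilde{G}$ (so that every Mackey summand lies in the block $B$ and (Inv) applies to it), and you should state that reduction explicitly rather than inherit it silently from Lemma \ref{approximation}. Your Clifford-theoretic claim that the projective cover of each simple $\tilde{B}$-module appears in $B\Ind$ is correct once this reduction is in place — indeed $P\Ind$ is then itself indecomposable by Green's theorem and has simple top $\tilde{S}$, so it \emph{is} that projective cover — but as you note, this is a fact the paper never isolates and it does need the full inertial-group reduction to hold. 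The mutation-compatibility argument in your proposal is essentially identical to the paper's: decompose $T$, apply Lemma \ref{approximation} to the approximation triangle, use that $(-)\Ind$ is triangulated and cones are preserved, and conclude.
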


\begin{proof}
Let $T$ be a tilting complex over $B$.
By the assumption (Inv), Lemma \ref{Inv-lemma} means that
the complex $T$ is $I_{\tilde{G}}(B)$-invariant.
Hence applying \cite[Proposition 2.1]{Ma2005} to $T$, 
we have that the induced complex
$T^{\uparrow I_{\tilde{G}}(B)}$ is a tilting complex.
Moreover, by \cite[Theorem 5.5.12]{NT1989}
the induction functor
$ -\otimes_{kI_{\tilde{G}}(B)} k\tilde{G}$
gives Morita equivalence between ${\rm mod}\:A$ and ${\rm mod}\:\tilde{B}$, where $A$ is the unique block
of $kI_{\tilde{G}}(B)$ covering $B$.
Hence $(T^{\uparrow I_{\tilde{G}}(B)}){\Ind} = T{\Ind}$ 
is a tilting complex over $\tilde{B}$.

Next we show that, for a tilting complex $\mu^\epsilon_{i_k}(T)$,
it holds that $\big(\mu_{i_k}^\epsilon(T)\big)\Ind$
is isomorphic to $\mu^\epsilon_{i_k}(T\Ind)$,
where $\epsilon$ means $+$ or $-$.
We prove the case of $\epsilon$ being $-$
because the other case can be shown similarly.

For a decomposition $T = T_{i_k} \oplus (\bigoplus_{i\neq i_k}T_i ) $
of $T$ with $T_{i_k}$ indecomposable,
we consider the distinguished triangle
$$ T_{i_k} \xrightarrow{f} M \rightarrow Cone(f) \rightarrow,$$
where $f$ is a minimal left approximation of $T_{i_k}$ with respect 
to ${\rm add}(\bigoplus_{i\neq i_k}T_i)$.
Then, by Lemma \ref{approximation}, the morphism
$f\Ind : T_{i_k}\Ind \rightarrow M{\Ind}$ is a minimal left approximation
of $T_{i_k}\Ind$ with respect to ${\rm add}(\bigoplus_{i\neq i_k}T_i{\Ind})$.
Hence we get a distinguished triangle 
$$ T_{i_k}{\Ind} \xrightarrow{f\Ind} M{\Ind} \rightarrow Cone(f\Ind) \rightarrow$$
with a minimal left approximation $f\Ind$ of $T_{i_k}{\Ind}$
with respect to ${\rm add}(\bigoplus_{i\neq i_k}T_i{\Ind})$
by Lemma \ref{approximation}.
Moreover we can show that $Cone(f\Ind)$ is isomorphic to 
$Cone(f)\Ind$ easily (for example see \cite[Lemma 3.4.10]{Z2014}).
Therefore we get
$$ \mu^-_{i_k}(T{\Ind}) 
=
Cone(f\Ind) \oplus(\bigoplus_{i\neq i_k}T_i{\Ind})
\cong 
Cone(f)\Ind \oplus(\bigoplus_{i\neq i_k}T_i{\Ind})
=
\big(\mu^-_{i_k}(T)\big)\Ind,
$$
which means that mutations commute with the induction functor in
our situation.
\end{proof}

%%%%%%%%%%%%%%%%%%%%%%%%%%%%%%%%
%%%%%%%%%%%%%%%%%%%%%%%%%%%%%%%%
%%%%%%%%%%%%%%%%%%%%%%%%%%%%%%%%
\begin{thm}\label{tilting-discrete}
Assume that the block $B$ is a tilting-discrete algebra
and that for any algebra $\Lambda$ derived equivalent to $B$
the number of two-term tilting complexes over $\Lambda$ is finite.
Then the block $\tilde{B}$ of $k\tilde{G}$
is also a tilting-discrete algebra.
\end{thm}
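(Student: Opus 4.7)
The plan is to invoke Theorem \ref{AiM2017}, which reduces the claim to showing that $2\text{-}{\rm tilt}_{\tilde{P}}\,\tilde{B}$ is finite for every tilting complex $\tilde{P}$ obtained from $\tilde{B}$ by iterated irreducible left mutations. Because $(-)\Ind$ commutes with irreducible mutations by Proposition \ref{commute}, and (after the Morita reduction \cite[Theorem 5.5.12]{NT1989} to $I_{\tilde{G}}(B)=\tilde{G}$, under which the block idempotent of $B$ becomes primitive central in $k\tilde{G}$ and hence coincides with that of $\tilde{B}$) the regular module $\tilde{B}$ is induced from the regular module $B$, every such $\tilde{P}$ has the form $P\Ind$ for a tilting complex $P$ over $B$ built from $B$ by the analogous sequence of irreducible left mutations.

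By hypothesis (ii) the set $2\text{-}{\rm tilt}_P\,B$ is finite, and combined with hypothesis (iii), Lemma \ref{graphlemma} upgrades it to a finite $|B|$-regular graph. I would next verify that induction descends to a map $\phi : 2\text{-}{\rm tilt}_P\,B \to 2\text{-}{\rm tilt}_{\tilde{P}}\,\tilde{B}$ that is order-preserving, injective, and commutes with irreducible mutations. Order-preservation follows from the Frobenius--Mackey computation already used in the proof of Lemma \ref{approximation}: assumption (Inv) yields
\[
\Hom_{\hmttildeB}(T\Ind,T'\Ind[i])\cong\Hom_{\hmtB}(T,T'[i])^{\oplus p^n},
\]
so vanishing of the right side for $i>0$ transfers to vanishing of the left; in particular $P\geq T\geq P[1]$ implies $P\Ind\geq T\Ind\geq P\Ind[1]=\tilde{P}[1]$. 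Injectivity follows from the same isomorphism after restriction together with Krull--Schmidt, and mutation-commutation is Proposition \ref{commute}.

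The ambient $2\text{-}{\rm tilt}_{\tilde{P}}\,\tilde{B}$ is $|\tilde{B}|$-regular by the first assertion of Lemma \ref{graphlemma}, which requires no finiteness hypothesis. Applying Remark \ref{induced-complex} to the indecomposable projective summands of $B$, together with (Inv) and the fact that $[\tilde{G}:G]$ is a $p$-power, I would deduce $|B|=|\tilde{B}|$. Consequently the image of $\phi$ is a finite $|\tilde{B}|$-regular subgraph of the $|\tilde{B}|$-regular ambient Hasse graph, which by a standard degree argument forces the image to be a union of connected components of the ambient.

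The remaining and main obstacle is to show that this union exhausts the ambient graph. Via the derived equivalence used in the proof of Lemma \ref{graphlemma}, the ambient $2\text{-}{\rm tilt}_{\tilde{P}}\,\tilde{B}$ corresponds to the support $\tau$-tilting quiver of $\End_{\hmttildeB}(\tilde{P})$, whose connectedness (from \cite{AIR2013}) then gives that the image equals the whole ambient. Alternatively, one may circumvent the appeal to connectedness by arguing directly that any $\tilde{T}\in 2\text{-}{\rm tilt}_{\tilde{P}}\,\tilde{B}$ can be reached from the unique maximum $\tilde{P}$ by a chain of irreducible left mutations, each descending through Proposition \ref{commute} to an irreducible left mutation over $B$, so that $\tilde{T}=T\Ind$ for some $T\in 2\text{-}{\rm tilt}_P\,B$. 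Either route yields the finiteness of $2\text{-}{\rm tilt}_{\tilde{P}}\,\tilde{B}$, and Theorem \ref{AiM2017} then concludes that $\tilde{B}$ is tilting-discrete.
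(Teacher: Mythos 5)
Your proposal follows essentially the same strategy as the paper's proof: a Morita reduction to the case $I_{\tilde{G}}(B)=\tilde{G}$, an appeal to Theorem~\ref{AiM2017}, the observation that every $\tilde{P}$ built by iterated irreducible left mutation from $\tilde{B}$ is $P\Ind$ for a corresponding $P$ over $B$ via Proposition~\ref{commute}, the order-preserving and edge-preserving injection $\phi$ constructed from the Frobenius–Mackey computation and assumption (Inv), and the Lemma~\ref{graphlemma} regularity argument. You are in fact more explicit than the paper about the final step: the paper jumps rather quickly from ``injection preserving covering relations, both sides $|B|$-regular, source finite'' to ``isomorphism,'' whereas you spell out that a finite $|\tilde{B}|$-regular subgraph inside a $|\tilde{B}|$-regular graph must be a union of connected components, and then appeal to the identification with the support $\tau$-tilting quiver of $\End_{\hmttildeB}(\tilde{P})$ to conclude. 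Two caveats. First, what you should cite from \cite{AIR2013} is not ``connectedness'' per se (which is not known unconditionally), but the result that a \emph{finite} connected component of the support $\tau$-tilting exchange quiver is already the whole quiver; combined with the connectedness of the image of $\phi$ (as the image of a finite poset with unique max and min), this gives surjectivity. Second, your proposed ``alternative route''—arguing directly that any $\tilde{T}\in 2$-${\rm tilt}_{\tilde{P}}\,\tilde{B}$ is reachable from $\tilde{P}$ by a finite chain of irreducible left mutations inside the two-term interval—begs the question: that finite reachability is precisely the connectedness/finiteness content you are trying to establish, and cannot be assumed at this stage. Stick with the first route.
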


\begin{proof}
We can assume $I_{\tilde{G}}(B)=\tilde{G}$ (see the proof
of Proposition \ref{commute}).
We show that
the set $2$-${\rm tilt}_{\tilde{T}}\,\tilde{B}$ is a finite set
for any tilting complex $\tilde{T}$ given by
iterated irreducible left mutation from $\tilde{B}$,
which means the $\tilde{B}$ is a tilting-discrete algebra by
Theorem \ref{AiM2017}.

By the choice of the tilting complex $\tilde{T}$,
there exist a sequence 
$(i_k, i_{k-1}, \ldots, i_1)$
such that 
$$ \tilde{T} \cong (\mu_{i_{k}}^-\mu_{i_{k-1}}^-
\cdots \mu_{i_{1}}^-)(\tilde{B}).$$
Moreover, for a tilting complex
$T:=(\mu_{i_{k}}^-\mu_{i_{k-1}}^-
\cdots \mu_{i_{1}}^-)(B)$ over $B$,
we have $T{\Ind} \cong \tilde{T}$ by Proposition \ref{commute}.
%Also, we remark that 
%since $B$ is a tilting-discrete algebra
%the set $2$-$\tilt_{T}B$ is a finite set by
%Theorem \ref{AiM2017} again.

On the other hand, for any tilting complex $U$ and for any $i>0$, by Frobenius reciprocity, Mackey's formula,
the assumption (Inv) and Lemma \ref{Inv-lemma}, we have the following
isomorphisms:
\begin{align*}
\Hom_{\hmttildeB}(T\Ind, U\Ind[i])
&\cong
\Hom_{\hmtB}(T, U\Ind{}_{\downarrow G}[i])\\
&\cong
\bigoplus_{\tilde{g}G\in \tilde{G}/G}\Hom_{\hmtB}(T,  U{\,\tilde{g}}[i])\\
&\cong
\Hom_{\hmtB}(T, U[i])^{\oplus p^n}.
\end{align*}
Hence we have that if $T\geq U$ then $T\Ind \geq U\Ind$.
By a similar argument, we have that
if $U\geq T[1]$ then $U\Ind \geq T\Ind[1]$.
Summarizing the above,
we have that if $T\geq U \geq T[1]$,
then $\tilde{T}\geq U{\Ind} \geq \tilde{T}[1]$.
Hence we get an injection map preserving partial order structures;
$$ 2\textnormal{-}{\rm tilt}_T\,B \rightarrowtail 
2\textnormal{-}{\rm tilt}_{\tilde{T}}\,\tilde{B}
\:\:(U \mapsto U{\Ind}).  $$
Now, both Hasse quivers of $2\textnormal{-}{\rm tilt}_T\,B$
and $2\textnormal{-}{\rm tilt}_{\tilde{T}}\,\tilde{B}$ are
$\vert B \vert$-regular graphs 
by Lemma \ref{graphlemma} (we remark that $\vert B \vert=\vert \tilde{B} \vert$
because $\tilde{G}/G$ is a $p$-group),
and the two sets have
the respective maximal elements $T$ and $\tilde{T}$
and have the respective minimal elements $T[1]$ and $\tilde{T}[1]$.
Moreover 
by Theorem \ref{Theorem 2.35} and Proposition \ref{commute},
for any tilting complexes $U$ and $V$ in $2\textnormal{-}{\rm tilt}_T\,B$
with $U>V$,
if there is no tilting complex $L$ over $B$ satisfying that
$U>L>V$,
then there is no tilting complex $\tilde{L}$ over $\tilde{B}$
satisfying that $U\Ind>\tilde{L}>V\Ind$.
Also, by the assumption and Lemma \ref{graphlemma},
the Hasse quiver of $2\textnormal{-}{\rm tilt}_T\,B$
is a finite  $|B|$-regular graph. 
%because $B$ is a Brauer tree algebra.
Hence we get that the above injection map is an isomorphism
between the partially ordered sets.
Therefore the set $2\textnormal{-}{\rm tilt}_{\tilde{T}}\,\tilde{B}$
is a finite set.
\end{proof}
%%%%
%%%%
%%%%

\begin{thm}\label{poset-iso}
Assume that the block $B$ is a tilting-discrete algebra
and that for any algebra $\Lambda$ derived equivalent to $B$
the number of two-term tilting complexes over $\Lambda$ is finite.
Then the induction functor $(-)\Ind : \tilt B \rightarrow \tilt \tilde{B}$
induces an isomorphism of partially ordered sets.
\end{thm}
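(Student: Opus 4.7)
The plan is to assemble pieces already in place: Proposition \ref{commute} makes $(-)\Ind : \tilt B \to \tilt \tilde{B}$ well-defined and intertwines it with irreducible mutations; Theorem \ref{tilting-discrete} gives tilting-discreteness of $\tilde{B}$, whence Theorem \ref{strongly-connected} yields strong tilting connectedness of $\tilde{B}$; and assumption (Inv) combined with Frobenius reciprocity and Mackey's formula controls $\Hom$-sets under induction. As usual, I may assume $I_{\tilde{G}}(B) = \tilde{G}$.

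First I would verify that $(-)\Ind$ is order-preserving and order-reflecting. Repeating the computation used in the proof of Theorem \ref{tilting-discrete}, for $T, U \in \tilt B$ and $i > 0$ one has
\[
\Hom_{\hmttildeB}(T\Ind, U\Ind[i]) \cong \Hom_{\hmtB}(T, U[i])^{\oplus p^n}.
\]
Hence $T \geq U$ in $\tilt B$ if and only if $T\Ind \geq U\Ind$ in $\tilt \tilde{B}$; in particular $(-)\Ind$ is an order-embedding, and injectivity follows from antisymmetry of $\geq$.

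The substantial work is surjectivity. I would take an arbitrary $\tilde{T} \in \tilt \tilde{B}$. Since $\tilde{B}$ is tilting-discrete by Theorem \ref{tilting-discrete}, Theorem \ref{strongly-connected} writes $\tilde{T}$ as $\mu^{\epsilon_k}_{\tilde{X}_k} \cdots \mu^{\epsilon_1}_{\tilde{X}_1}(\tilde{B})$, where each $\tilde{X}_j$ is an indecomposable summand of the relevant intermediate tilting complex and each $\epsilon_j \in \{+,-\}$. Starting from $\tilde{B} = B\Ind$, I would induct on $j$, showing that each intermediate tilting complex has the form $T_j\Ind$ for some $T_j \in \tilt B$. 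The inductive step rests on the following observation: Green's indecomposability (Proposition \ref{Green}) combined with (Inv) and Krull--Schmidt forces the indecomposable decomposition of $T_j\Ind$ to be exactly the induction of the indecomposable decomposition of $T_j$, so every indecomposable summand $\tilde{X}_{j+1}$ of $T_j\Ind$ has the form $Y_{j+1}\Ind$ for a unique indecomposable summand $Y_{j+1}$ of $T_j$. Proposition \ref{commute} then yields $\mu^{\epsilon_{j+1}}_{\tilde{X}_{j+1}}(T_j\Ind) \cong \bigl(\mu^{\epsilon_{j+1}}_{Y_{j+1}}(T_j)\bigr)\Ind$, producing the next $T_{j+1}$. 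After $k$ steps we obtain $T \in \tilt B$ with $T\Ind \cong \tilde{T}$.

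The main obstacle is the indecomposable-summand correspondence used in the inductive step. Uniqueness of $Y_{j+1}$ requires that inductions of non-isomorphic indecomposables over $B$ remain non-isomorphic over $\tilde{B}$, and this is where (Inv) is essential: Mackey's formula together with (Inv) gives $Y\Ind \downarrow_G \cong Y^{\oplus p^n}$ for any indecomposable summand $Y$ of some $T_j$, and Krull--Schmidt then separates summands. Once this correspondence is in place, the surjectivity argument reduces to iterated application of Proposition \ref{commute}, and combining with the order-embedding established above completes the proof that $(-)\Ind$ is an isomorphism of partially ordered sets.
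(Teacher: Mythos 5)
Your proof is correct, and the surjectivity argument is exactly the paper's: express $\tilde{T}$ as an iterated irreducible mutation from $\tilde{B}$ (using strong tilting-connectedness of $\tilde{B}$, which follows from Theorem \ref{tilting-discrete} and Theorem \ref{strongly-connected}), then pull the mutation sequence back to $B$ via Proposition \ref{commute}. Where you genuinely differ is in the order-isomorphism step. The paper argues at the level of covering relations, combining Theorem \ref{Theorem 2.35} with Proposition \ref{commute} to show that Hasse-quiver arrows of $\tilt B$ map to Hasse-quiver arrows of $\tilt \tilde{B}$, and then invokes injectivity and surjectivity; you instead reuse the Frobenius-reciprocity/Mackey isomorphism $\Hom_{\hmttildeB}(T\Ind, U\Ind[i]) \cong \Hom_{\hmtB}(T, U[i])^{\oplus p^n}$ to read off directly that $T \geq U$ if and only if $T\Ind \geq U\Ind$. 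Your route is cleaner: it gives the order-embedding (and hence injectivity, by antisymmetry) at once, and it sidesteps the implicit step of recovering the poset order from its Hasse quiver. You also spell out the indecomposable-summand correspondence, via Green's indecomposability, condition (Inv), and Krull--Schmidt, which the paper's surjectivity argument uses tacitly when transferring the index of the summand to mutate at each step; this is a worthwhile clarification, since without it the notation $\mu_{i_j}^{\epsilon}$ is not literally meaningful over both $B$ and $\tilde{B}$.
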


\begin{proof}
By Proposition \ref{commute}, 
we have the well-defined map $(-)\Ind : \tilt B \rightarrow \tilt \tilde{B}$.
Let $T_1$ and $T_2$ be tilting complexes over $B$ and
assume that $T_1 > T_2$ and that there exists no tilting complex $L$ over $B$
such that $T_1 > L > T_2$.
%Then, by Theorem \ref{Theorem 2.35}
%there is an indecomposable summand $X$ of $T_1$ such that
%$T_2=\mu^-_X(T_1)$.
Hence, by Theorem \ref{Theorem 2.35} and Proposition \ref{commute} again,
we have that 
$T_1\Ind > T_2\Ind$ and there exists no tilting complex $\tilde{L}$ over $\tilde{B}$
such that $T_1\Ind > \tilde{L} > T_2\Ind$ in $\tilt \tilde{B}$.
Also, it is obvious that the map $(-)\Ind : \tilt B \rightarrow \tilt \tilde{B}$
is an injection.
There remains to show that the map is surjection.
By Theorem \ref{tilting-discrete} and Theorem \ref{strongly-connected},
$\tilde{B}$ is a strongly tilting-connected algebra.
Hence, for any tilting complex $\tilde{T}$ over $\tilde{B}$, 
there exists a sequence 
$(i_k, i_{k-1}, \ldots, i_1)$
such that 
$ \tilde{T} \cong (\mu_{i_{k}}^\epsilon\mu_{i_{k-1}}^\epsilon
\cdots \mu_{i_{1}}^\epsilon)(\tilde{B}),$ where $\epsilon$
means $+$ or $-$.
Therefore, for a tilting complex $T=(\mu_{i_{k}}^\epsilon\mu_{i_{k-1}}^\epsilon
\cdots \mu_{i_{1}}^\epsilon)(B)$ over $B$,
by Proposition \ref{commute}, 
we have that $\tilde{T}\cong T{\Ind}$.
\end{proof}

%%%%%%%%%%%%%%%%%%%%%%%%%%%%%%%%
%%%%%%%%%%%%%%%%%%%%%%%%%%%%%%%%
%%%%%%%%%%%%%%%%%%%%%%%%%%%%%%%%

As an immediate application of Theorem \ref{tilting-discrete},
we show that the homotopy category $\hmttildeB$ satisfies
a Bongartz-type Lemma.

\begin{thm}\label{Bongartz}
In the same notation as Theorem \ref{tilting-discrete},
any pretilting complex $\tilde{P}$ in $\hmttildeB$ is a partial tilting complex,
that is, there exists a pretilting complex $\tilde{P'}$ in $\hmttildeB$
such that $\tilde{P}\oplus \tilde{P'}$ is a tilting complex in $\hmttildeB$.
\end{thm}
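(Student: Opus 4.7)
The plan is to leverage the tilting-discreteness of $\tilde{B}$ just obtained in Theorem~\ref{tilting-discrete} and construct the Bongartz completion $\tilde{P}'$ by minimizing over tilting complexes dominating $\tilde{P}$.

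After a shift I may assume $\tilde{B}\geq \tilde{P}$, which is possible since $\tilde{P}\in \hmttildeB$ is bounded. Fix any $T$ in the nonempty set
$$\mathcal{S}\;:=\;\{\,U\in\tilt\,\tilde{B}\mid U\geq\tilde{P}\,\}$$
(it contains $\tilde{B}$). Since $\tilde{P}$ is bounded, $\tilde{P}\geq T[\ell]$ for some $\ell\gg 0$, so every element of $\mathcal{S}$ below $T$ lies in the interval $\{\,U\in\tilt\,\tilde{B}\mid T\geq U\geq T[\ell]\,\}$, which is finite by tilting-discreteness (Definition~\ref{def-tilting-discrete}). Hence the subset of $\mathcal{S}$ below $T$ is nonempty and finite, and I extract a minimum $T_0$, which is automatically minimal in all of $\mathcal{S}$.

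Next I would show that $\tilde{P}$ is a direct summand of $T_0$. The key step is establishing $T_0\geq \tilde{P}\geq T_0[1]$. The left inequality is the defining condition of $\mathcal{S}$. For the right one, suppose for contradiction that $\Hom_{\hmttildeB}(\tilde{P},T_0[i])\neq 0$ for some $i\geq 2$; then there is an indecomposable summand $X$ of $T_0$ such that the irreducible left mutation $\mu^-_X(T_0)$, obtained from the approximation triangle $X\to M\to Y\to $ with $M\in\mathrm{add}(T_0/X)$, still lies above $\tilde{P}$. This uses the long exact $\Hom_{\hmttildeB}(\tilde{P},-)$-sequence attached to the mutation triangle together with the pretilting property $\Hom_{\hmttildeB}(\tilde{P},\tilde{P}[j])=0$ for $j\neq 0$ to control the intermediate terms, contradicting the minimality of $T_0$. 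Once $T_0\geq\tilde{P}\geq T_0[1]$ is in hand, I would transport the problem through the derived equivalence $\hmttildeB\simeq K^b(\mathrm{proj}\,\End_{\hmttildeB}(T_0))$ induced by $T_0$: under this equivalence $T_0$ corresponds to the free module and the image of $\tilde{P}$ becomes a two-term presilting complex, so Adachi--Iyama--Reiten's Bongartz completion for support $\tau$-tilting modules \cite{AIR2013} produces a two-term tilting complex containing it as a summand. Pulling back yields the desired complement $\tilde{P}'$.

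The hard part will be the minimality argument forcing $\tilde{P}\geq T_0[1]$: one must identify an indecomposable summand $X$ of $T_0$ whose left mutation remains in $\mathcal{S}$, which requires a careful approximation-theoretic computation of $\Hom_{\hmttildeB}(\tilde{P},\mu^-_X(T_0)[i])$. Once this is available, the remaining reduction to $\tau$-tilting theory is formal, so the entire content of the theorem concentrates on transferring the classical Bongartz-completion argument from the two-term level to arbitrary tilting complexes by means of the tilting-discreteness of $\tilde{B}$.
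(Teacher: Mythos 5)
The paper's proof of Theorem \ref{Bongartz} is a one-line citation: by Theorem \ref{tilting-discrete} the block $\tilde{B}$ is tilting-discrete, and the Bongartz-type completion then follows immediately from \cite[Theorem 2.15]{AiM2017}. Your proposal takes a genuinely different route --- you attempt to re-derive that cited theorem from scratch by exhibiting a minimal tilting complex $T_0$ above $\tilde{P}$ in the poset $\tilt\,\tilde{B}$ and then transporting to the two-term setting of \cite{AIR2013}. The skeleton (normalize by a shift so $\tilde{B}\geq\tilde{P}$, use tilting-discreteness to extract a minimal $T_0\geq\tilde{P}$, show $T_0\geq\tilde{P}\geq T_0[1]$, and reduce to $\tau$-tilting theory through the derived equivalence induced by $T_0$) is a reasonable outline of the machinery lying behind the Aihara--Mizuno result.

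However, there is a genuine gap precisely at the step you flag as ``the hard part,'' and it is not a small one --- it is the entire content of the theorem being cited. The claim is that if $T_0$ is minimal in $\mathcal{S}$ but $\tilde{P}\not\geq T_0[1]$, then some irreducible left mutation $\mu^-_X(T_0)$ still lies in $\mathcal{S}$. Applying $\Hom_{\hmttildeB}(-,\tilde{P}[1])$ to the approximation triangle $X\xrightarrow{f}M\to Y\to X[1]$ and using $T_0\geq\tilde{P}$ to kill $\Hom_{\hmttildeB}(M,\tilde{P}[1])$ gives
$$\Hom_{\hmttildeB}(Y,\tilde{P}[1])\cong \mathrm{coker}\bigl(\Hom_{\hmttildeB}(M,\tilde{P})\xrightarrow{\ -\circ f\ }\Hom_{\hmttildeB}(X,\tilde{P})\bigr),$$
and this cokernel need not vanish: $f$ is only a minimal left ${\rm add}(T_0/X)$-approximation, and $\tilde{P}$ has no reason to lie in ${\rm add}(T_0/X)$, so surjectivity of $-\circ f$ is not automatic for any choice of indecomposable summand $X$ of $T_0$. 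Your sketch appeals to the pretilting vanishing $\Hom_{\hmttildeB}(\tilde{P},\tilde{P}[j])=0$ for $j\neq 0$ to control the intermediate terms, but this quantity does not enter the relevant long exact sequence. Without identifying which $X$ works (and why), the minimality argument is unsubstantiated, so the proposal does not constitute a complete proof of the statement; the paper avoids all of this by simply invoking \cite[Theorem 2.15]{AiM2017}, where the approximation-theoretic work is carried out.
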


\begin{proof}
By Theorem \ref{tilting-discrete},
$\tilde{B}$ is a tilting-discrete algebra,
that is, 
the homotopy category $\hmttildeB$ is a tilting-discrete category.
Therefore we have the result by \cite[Theorem 2.15]{AiM2017}.
\end{proof}

%%%%%%%%%%%%%%%%%%%%%%

In the rest of this section,
we assume the block $B$ of $kG$ has
a cyclic defect group.
In this case, the block $B$ has the following nice properties.
%%%%%%%%%%%%%%%%%%%%%%%%%%%%%%%%
%%%%%%%%%%%%%%%%%%%%%%%%%%%%%%%%
%%%%%%%%%%%%%%%%%%%%%%%%%%%%%%%%
\begin{prop}\label{cyclic-block}
Let $B$ be a block of $kG$ with a cyclic defect group.
Then we have the following.
\begin{enumerate}
\item
\cite[Section 5]{Al86}
The block $B$ is a Brauer tree algebra.
\item
\cite[Theorem 4.2]{Ri1989-2}
Any algebra derived equivalent to $B$ is a Brauer tree algebra
associated to a Brauer tree
with the same number of edges and multiplicity as the Brauer tree of $B$.
\item
\cite[Lemma 3.22]{KK2020}
The condition (Inv) holds automatically. 
\item
\cite[Theorem 1.2]{Ai2013}
$B$ is a tilting-discrete algebra.
\item
For any tilting complex $P$ over $B$,
the set $2$-${\rm tilt}_P\,B$ is a finite  $|B|$-regular graph
(see Corollary \ref{graphlemma-for-Brauer-tree-algebra}).
\end{enumerate}

\end{prop}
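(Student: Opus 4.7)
The plan is to dispose of each of the five assertions in turn; all are either immediate consequences of results already recalled in the Preliminaries section or direct citations of known theorems, so the proof is essentially a bookkeeping exercise.

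For (1), I would simply invoke Proposition \ref{cyclic-defect}: any block with non-trivial cyclic defect group is a Brauer tree algebra, and the trivial-defect case is semisimple (a single matrix algebra), which is a degenerate Brauer tree algebra with one edge. For (2), the appropriate citation is \cite[Theorem 4.2]{Ri1989-2}, where Rickard classifies algebras derived equivalent to a Brauer tree algebra as precisely those Brauer tree algebras sharing the same number of edges (this number equals the number of isomorphism classes of simples, which is a derived invariant) and the same multiplicity on the exceptional vertex (which can be recovered from the Cartan determinant, also a derived invariant).

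Assertion (3) is \cite[Lemma 3.22]{KK2020}; the content is that for a Brauer tree algebra, each indecomposable module is uniquely determined up to isomorphism by combinatorial data (its position on the Auslander--Reiten quiver, or equivalently the corresponding interval on the Brauer tree), and the inertial group $I_{\tilde{G}}(B)$ must preserve this data by virtue of stabilizing the block. Assertion (4) follows from \cite[Theorem 1.2]{Ai2013}: Brauer tree algebras are representation-finite symmetric algebras, and every representation-finite symmetric algebra is tilting-discrete.

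Finally, (5) is an immediate application of Corollary \ref{graphlemma-for-Brauer-tree-algebra} already established above, since $B$ is a Brauer tree algebra by (1). The only step that requires more than a citation is confirming that each ingredient applies in our setting; the main (mild) obstacle is really just aligning the hypotheses of the cited results with the current notation, and checking that the trivial-defect case is absorbed correctly into each claim. Since every input is now in place, the proposition follows by combining these observations.
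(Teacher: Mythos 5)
Your proposal is in substance identical to the paper's treatment: the proposition carries no separate proof, the cited references do the work for items (1)--(4), and item (5) is read off from Corollary \ref{graphlemma-for-Brauer-tree-algebra}, exactly as you lay out. One small inaccuracy in your aside on (1): a defect-zero block is a matrix algebra and hence semisimple, and a semisimple algebra is not a Brauer tree algebra (even the one-edge, multiplicity-one tree already yields the non-semisimple local algebra $k[x]/(x^2)$), so the hypothesis should be read, consistently with Proposition \ref{cyclic-defect}, as a non-trivial cyclic defect group rather than trying to absorb the trivial case. Your heuristic for (3) also overstates matters --- merely stabilizing the block does not by itself force every indecomposable module to be fixed under conjugation; the cited lemma uses that $I_{\tilde{G}}(B)/G$ is a $p$-group acting on data whose relevant automorphisms have order prime to $p$ --- but since you ultimately defer to \cite[Lemma 3.22]{KK2020}, this does not affect correctness.
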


\begin{thm}\label{mainthm}
Let $G$ be a finite group and $\tilde{G}$ be a finite group having $G$
as a normal subgroup
and assume that the index of $G$ in $\tilde{G}$ is a $p$-power.
Let $B$ be a block of the finite group $G$ with cyclic defect group,
and $\tilde{B}$ be the unique block of $k\tilde{G}$ covering $B$.
Then the following hold.
\begin{enumerate}
\item
$\tilde{B}$ is a tilting-discrete algebra.
\item
The induction functor $(-)\Ind : \tilt B \rightarrow \tilt \tilde{B}$
induces an isomorphism of partially ordered sets.
\end{enumerate}
\end{thm}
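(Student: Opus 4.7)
The plan is to reduce this theorem to a direct application of Theorems \ref{tilting-discrete} and \ref{poset-iso}, whose combined hypotheses are the invariance condition (Inv) on $B$-modules under $I_{\tilde{G}}(B)$, tilting-discreteness of $B$, and finiteness of the number of two-term tilting complexes for every algebra derived equivalent to $B$. The key observation is that all three hypotheses become automatic once the defect group of $B$ is cyclic, and this is essentially catalogued in Proposition \ref{cyclic-block}.

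More concretely, first I would verify condition (Inv) by invoking Proposition \ref{cyclic-block}(3), which states exactly that any indecomposable $B$-module is $I_{\tilde{G}}(B)$-invariant when $B$ has cyclic defect. Next, Proposition \ref{cyclic-block}(4) supplies the tilting-discreteness of $B$ itself, so the second hypothesis is immediate.

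For the third hypothesis, I would use Proposition \ref{cyclic-block}(2): every algebra $\Lambda$ derived equivalent to $B$ is again a Brauer tree algebra, with the same number of edges and the same multiplicity as the Brauer tree of $B$. In particular $\Lambda$ is a Brauer tree algebra, so Corollary \ref{graphlemma-for-Brauer-tree-algebra} applied with $P = \Lambda$ shows that the set of two-term tilting complexes over $\Lambda$ is finite (alternatively, by the finiteness statement in \cite[Theorem 1.1]{AMN2020} or \cite[Theorem 5.2]{Ai2013}). Hence the third hypothesis of Theorems \ref{tilting-discrete} and \ref{poset-iso} is also verified.

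With all three hypotheses in place, assertion (1) of the theorem follows immediately from Theorem \ref{tilting-discrete}, and assertion (2) follows immediately from Theorem \ref{poset-iso}. There is no genuine obstacle to overcome here: the substance of the argument has already been compressed into Theorems \ref{tilting-discrete} and \ref{poset-iso}, while the classical structure theory of blocks of cyclic defect (Brauer tree algebras and their derived equivalence class) is what allows each abstract hypothesis to be discharged automatically in this specialization.
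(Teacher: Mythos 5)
Your proposal is correct and follows the same route as the paper: reduce to Theorems \ref{tilting-discrete} and \ref{poset-iso} and discharge their three hypotheses via Proposition \ref{cyclic-block}. The paper's own proof is a one-liner citing Proposition \ref{cyclic-block}; you have simply made explicit which items of that proposition serve which hypothesis, including the use of item (2) together with Corollary \ref{graphlemma-for-Brauer-tree-algebra} for the finiteness of two-term tilting complexes over every algebra derived equivalent to $B$.
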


\begin{proof}
By Proposition \ref{cyclic-block},
the blocks $B$ and $\tilde{B}$
satisfy the assumption in Theorem \ref{tilting-discrete}
and Theorem \ref{poset-iso}.
Therefore we have the result.
\end{proof}
%%%%%%%%%%%%%%%%%%%%%%%%%%%%%%%%
%%%%%%%%%%%%%%%%%%%%%%%%%%%%%%%%
%%%%%%%%%%%%%%%%%%%%%%%%%%%%%%%%
%%%%%%%%%%%%%%%%%%%%%%%%%%%%%%%%
%%%%%%%%%%%%%%%%%%%%%%%%%%%%%%%%
%%%%%%%%%%%%%%%%%%%%%%%%%%%%%%%%
%%%%%%%%%%%%%%%%%%%%%%%%%%%%%%%%
%%%%%%%%%%%%%%%%%%%%%%%%%%%%%%%%
%%%%%%%%%%%%%%%%%%%%%%%%%%%%%%%%

\end{document}